\documentclass[12pt]{elsarticle}
\usepackage{graphicx} 
\usepackage{stmaryrd}
\usepackage{bbold}
\usepackage{subcaption}
\usepackage{geometry}
\usepackage{xcolor} 
\usepackage[normalem]{ulem} 
\usepackage{enumerate}
\usepackage{float}
\usepackage{mathrsfs}
\usepackage{booktabs} 
\usepackage{siunitx}  
\usepackage{amsthm,amsmath,amsfonts,amssymb}

\makeatletter
\def\ps@pprintTitle{%
 \let\@oddhead\@empty
 \let\@evenhead\@empty
 \def\@oddfoot{\hfill\@date}%
 \let\@evenfoot\@empty
}
\makeatother

\definecolor{darkorange}{rgb}{0.9, 0.43, 0.1}



\geometry{a4paper,scale=0.85}
\newtheorem{theorem}{Theorem}[section]
\newtheorem{lemma}{Lemma}[section]
\newtheorem{remark}{Remark}[section]

\numberwithin{equation}{section}

\newtheoremstyle{noparens}%
  {}{}%
  {\itshape}{}%
  {\bfseries}{.}%
  { }%
  {\thmname{#1}\thmnumber{ #2}\mdseries\thmnote{ #3}}

\theoremstyle{noparens}


\geometry{left=25mm,right=25mm,top=25mm,bottom=25mm}

\begin{document}

\begin{frontmatter}

\title{Maximum Bound Principle and Bound Preserving ETD schemes for a Phase-Field Model of Tumor Growth with Extracellular Matrix Degradation}

\author[BJUT]{Qiumei Huang}
\ead{qmhuang@bjut.edu.cn}

\author[hkpu]{Zhonghua Qiao}
\ead{zqiao@polyu.edu.hk}

\author[UMass]{Cheng Wang}
\ead{cwang1@umassd.edu}

\author[BJUT]{Huiting Yang \corref{cor1}}
\ead{yanghuiting2021@emails.bjut.edu.cn}

\address[BJUT]{College of Mathematics, Faculty of Science,
Beijing University of Technology,
Beijing 100124, China}
\address[hkpu]{Department of Applied Mathematics, The Hong Kong Polytechnic University,
Hung Hom, Kowloon, Hong Kong}
\address[UMass]{Department of Mathematics, University of Massachusetts Dartmouth, North Dartmouth, MA 02747}

\cortext[cor1]{Corresponding author}

\begin{abstract}
  In cancer research, the role of the extracellular matrix (ECM) and its associated matrix-degrading enzyme (MDE) has been a significant area of focus. This study presents a numerical algorithm designed to simulate a previously established tumor model that incorporates various biological factors, including tumor cells, viable cells, necrotic cells, and the dynamics of MDE and ECM. The model consists of a system that includes a phase field equation, two reaction-diffusion equations, and two ordinary differential equations. We employ the fast exponential time differencing Runge-Kutta (ETDRK) method with stabilizing terms to solve this system, resulting in a decoupled, explicit, linear numerical algorithm. The objective of this algorithm is to preserve the physical properties of the model variables, including the maximum bound principle (MBP) for nutrient concentration and MDE volume fraction, as well as bound preserving for ECM density and tumor volume fraction. We perform simulations of 2D and 3D tumor models {and discuss how different biological components impact growth dynamics. These simulations may help predict tumor evolution trends, offer insights for related biological and medical research,} potentially reduce the number and cost of experiments, and improve research efficiency.
\end{abstract}

\begin{keyword}
Extracellular matrix degradation \sep
Tumor growth \sep
Phase field equation \sep
Exponential time differencing Runge--Kutta \sep
Maximum bound principle \sep
Bound preserving
\end{keyword}

\date{\today}

\end{frontmatter}

\section{Introduction}

In 2020, there were an estimated 19.3 million new cancer cases and nearly 10 million cancer deaths worldwide. By 2040, the global cancer burden is expected to reach 28.4 million cases, an increase $47\%$ from 2020 \cite{sung2021global}. These statistics underscore the importance of cancer research.

The extracellular matrix (ECM) is a complex network of large molecules (such as collagen, enzymes, and glycoproteins) secreted extracellularly, serving primarily as a structural scaffold and biochemical support for cells and tissues. However, the ECM in solid tumors exhibits significant differences from that in normal organs. The ECM often becomes disordered in cases of carcinogenesis. This abnormal ECM can influence cancer progression by directly promoting cell transformation and metastasis \cite{lu2012extracellular}. Over the past three to four decades, there has been an exponential increase in the study and recognition of the importance of the matrix in cancer \cite{cox2021matrix}. Numerous investigations have demonstrated that the ECM associated with tumors actively contributes to tumor cell growth, invasion, metastasis, and angiogenesis. Beyond influencing tumor cell behavior, the ECM also indirectly affects drug delivery mechanisms \cite{henke2020extracellular}. Cancer cells can secrete matrix-degrading enzymes (MDEs) that break down the ECM, allowing cancer cells to breach tissue barriers and spread locally, a critical process in cancer cell invasion. Consequently, matrix degradation is recognized as a fundamental mechanism in tumor development \cite{madsen2015source}. In recent years, many scientists have devoted considerable efforts to incorporating the ECM into mathematical models of tumors. A reaction-diffusion partial differential equation system was used in \cite{chaplain2011mathematical} to simulate the interactions among cancer cells, the ECM, and MDE. A system of five coupled partial differential equations was established by \cite{nguyen2018mathematical} to describe the dynamics and interactions of cancer cells, collagen fibers, and the enzymes MMP and lysyl oxidase (LOX), with the objective of better understanding cancer cell migration through ECM remodeling. A multiscale hybrid system composed of partial differential equations and stochastic differential equations was constructed by \cite{sfakianakis2020hybrid}, describing the evolution of epithelial cells (ECs) and mesenchymal-like cells (MCs), to simulate the combined invasion of the ECM by two types of cancer cells (the ECs and the MCs).
 Tumor growth towards lower extracellular matrix conductivity regions under Darcy’s law and steady morphology was discussed in \cite{zheng2022tumor} as well.

Phase-field equations have become increasingly popular in mathematical modeling of tumors, offering advantages such as the ability to describe complex interface dynamics and handle multiscale problems. Wu et al. \cite{wu2014stabilized} considered a diffuse-interface tumor growth system consisting of a reactive Cahn-Hilliard equation and a reaction-diffusion equation. They employed finite element spatial discretization and a modification of the Crank-Nicolson method as time discretization.  Xu et al. \cite{xu2017full} considered a model composed of two coupled compartments: cellular growth model using the phase-field approach, and the chemical transport model using standard diffusion-reaction equations. The isogeometric analysis (IGA) is used as the spatial discretization, while a generalized-$\alpha$ method is employed as the temporal discretization. The phase field Cahn-Hilliard equation and tumor growth models were simulated in \cite{mohammadi2019simulation}, using a second-order semi-implicit backward differential formula with a stabilized term in the time discretization, and the element-free Galerkin method for spatial discretization. In this research area, significant advancements have been made by J. Tinsley Oden and collaborators. Their work involves phase-field modeling, considering various factors influencing tumor growth \cite{fritz2021analysis,fritz2019local,fritz2019unsteady,hawkins2012numerical,lima2014hybrid}. These studies have employed finite element methods for spatial discretization and semi-implicit algorithms for time discretization.

In recent years, the numerical design has increasingly focused on preserving the physical properties of the original models. The preservation of the  MBP and bound preserving has attracted significant attention from researchers, making it a hot research topic in the field of computational mathematics.  Ju et al. \cite{ju2021maximum} developed time integration schemes for semilinear parabolic equations that preserve the high-order  MBP, utilizing the integrating factor Runge-Kutta (IFRK) method. Feng et al. \cite{feng2021maximum} presented a modified Leap-Frog finite difference scheme for the Allen-Cahn equation, which is linear second-order, maximum-principle preserving, and unconditionally energy-stable. Furthermore, for Allen-Cahn equations, Zhang et al. \cite{zhang2023third,zhang2023temporal} introduced explicit, large time-stepping algorithms that maintain fixed-points and maximum principle for any time step size, and a high-order maximum-principle-preserving framework. In particular, Du et al. \cite{du2019maximum,du2021maximum} introduced a framework that allows for the analysis of the MBP in the context of semilinear parabolic equations and proposed schemes for exponential time differencing that unconditionally preserve the MBP. Given the importance of physical structure-preserving properties in the numerical simulations, this study also aims to construct schemes that preserve the MBP and bound preserving specifically for tumor growth models. In our previous research \cite{huang2024maximum}, we successfully developed a second-order exponential time-differencing Runge-Kutta method (ETDRK2) that ensures the non-negativity of the tumor sequence parameter and two concentration variables. The constructed algorithm effectively simulates the growth of prostate cancer (PCa) and its response to drug treatment.

The primary focus of this study encompasses several aspects. We propose efficient, explicit, linear, and decoupled ETD schemes for the tumor growth model, which are easy to solve and enable an effective use of fast algorithms for 2D and 3D tumor models.
Additionally, we prove the numerical scheme also adheres to the MBP and boundedness as much as possible. {Furthermore, we also simulated tumor evolution under different initial ECM conditions and analyzed how various biological components affect the growth dynamics of the system through different parameter settings. These simulation results contribute to a deeper understanding of the tumor growth process and offer insights that may assist in predicting tumor growth.
}

The structure of this paper is organized as follows. In Section 2, the tumor growth model is reviewed. In Section 3, we introduce fully discrete schemes and prove that these schemes satisfy the discrete MBP and ensure discrete bound preservation. In Section 4, the numerical simulations of 2D and 3D tumor growth models are presented. Finally, the main achievements of this study is summarized, and potential future research directions are discussed in Section 5.


\section{Model review}

This section provides an overview of the mathematical model of tumor growth under investigation, providing a phase-field approach to tumor growth that incorporates the degradation of the ECM. This model, a simplified version of the original model in Fritz et al. \cite{fritz2019local}, employs a fourth-order phase-field equation to capture tumor dynamics. It also includes two reaction-diffusion equations for critical nutrients and MDE. The behavior of necrotic tumor cells and the ECM are represented through two separate ordinary differential equations.

\subsection{Total tumor cells dynamics}

In the area of tumor modeling, tumors are conventionally segregated into three distinct regions \cite{fritz2023tumor}, as illustrated in Fig.~\ref{fig:tumor}(\subref{tumor3phase}).  These regions encompass the proliferative region, the hypoxic region, and the necrotic region, represented by the volume fractions $\phi_P$, $\phi_H$, and $\phi_N$, respectively. Therefore, the total volume fraction of tumor cells, $\phi_T$, can be expressed as the sum of these three components, that is, $\phi_T=\phi_P+\phi_H+\phi_N$. As illustrated in Fig.~\ref{fig:tumor}(\subref{tumor3cell}), the outer layer, known as the proliferative region, is characterized by cells that have a high propensity to mitosis due to sufficient nutrients, thus fueling tumor growth. The middle layer, or the hypoxic region, is populated by cells that are in a state of hypoxia due to nutrient insufficiency, rendering them incapable of proliferation. The innermost layer, called the necrotic region, is where cells have succumbed to nutrient deficiency, primarily due to the difficulty in nutrient penetration into the tumor core. Considering that both the proliferative and hypoxic regions consist of viable cells, we are able to express the volume fraction of these viable tumor cells, $\phi_V$, as the sum of $\phi_P$ and $\phi_H$, that is, $\phi_V=\phi_T-\phi_N$. The aforementioned modeling method for tumors is highly popular in the numerical simulations, providing a clearer understanding of the structure and composition of the tumor.
\begin{figure}[t!]
  \centering
  \begin{subfigure}{0.36\textwidth}
      \includegraphics[width=\textwidth]{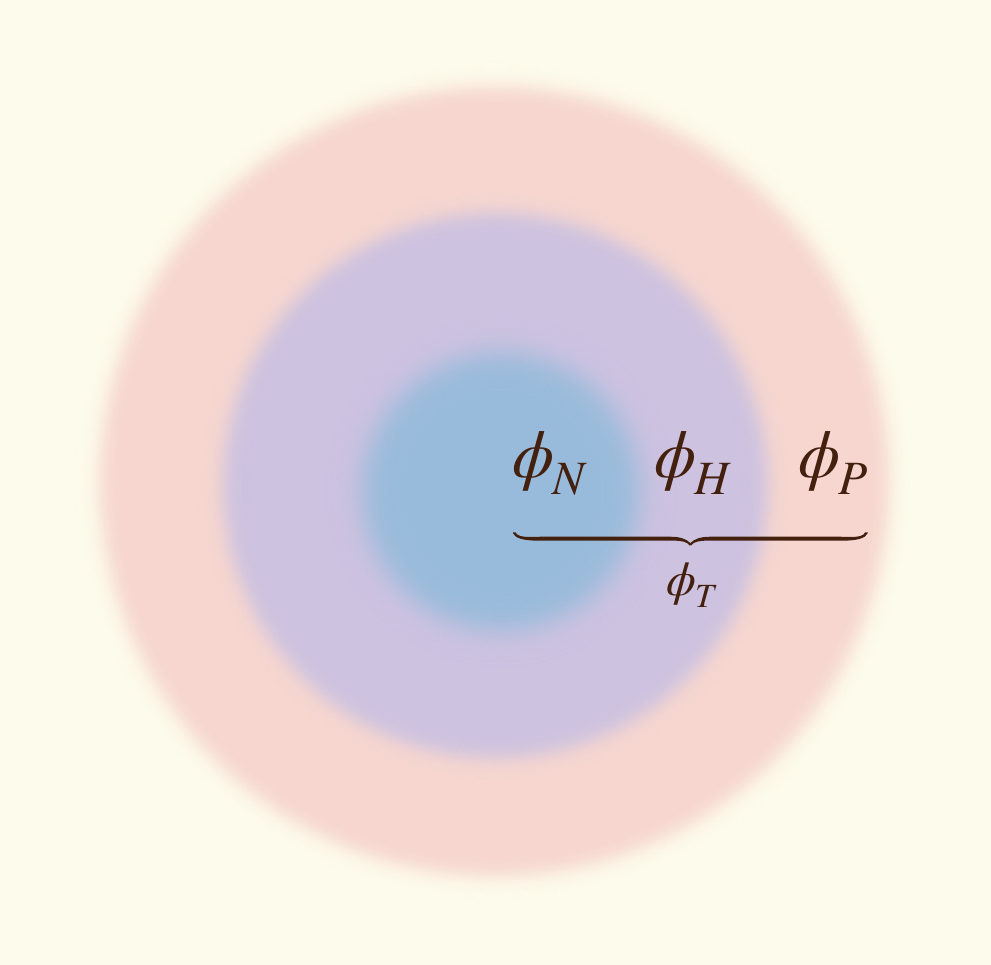}
      \caption{}
      \label{tumor3phase}
  \end{subfigure}
  \begin{subfigure}{0.54\textwidth}
      \includegraphics[width=\textwidth]{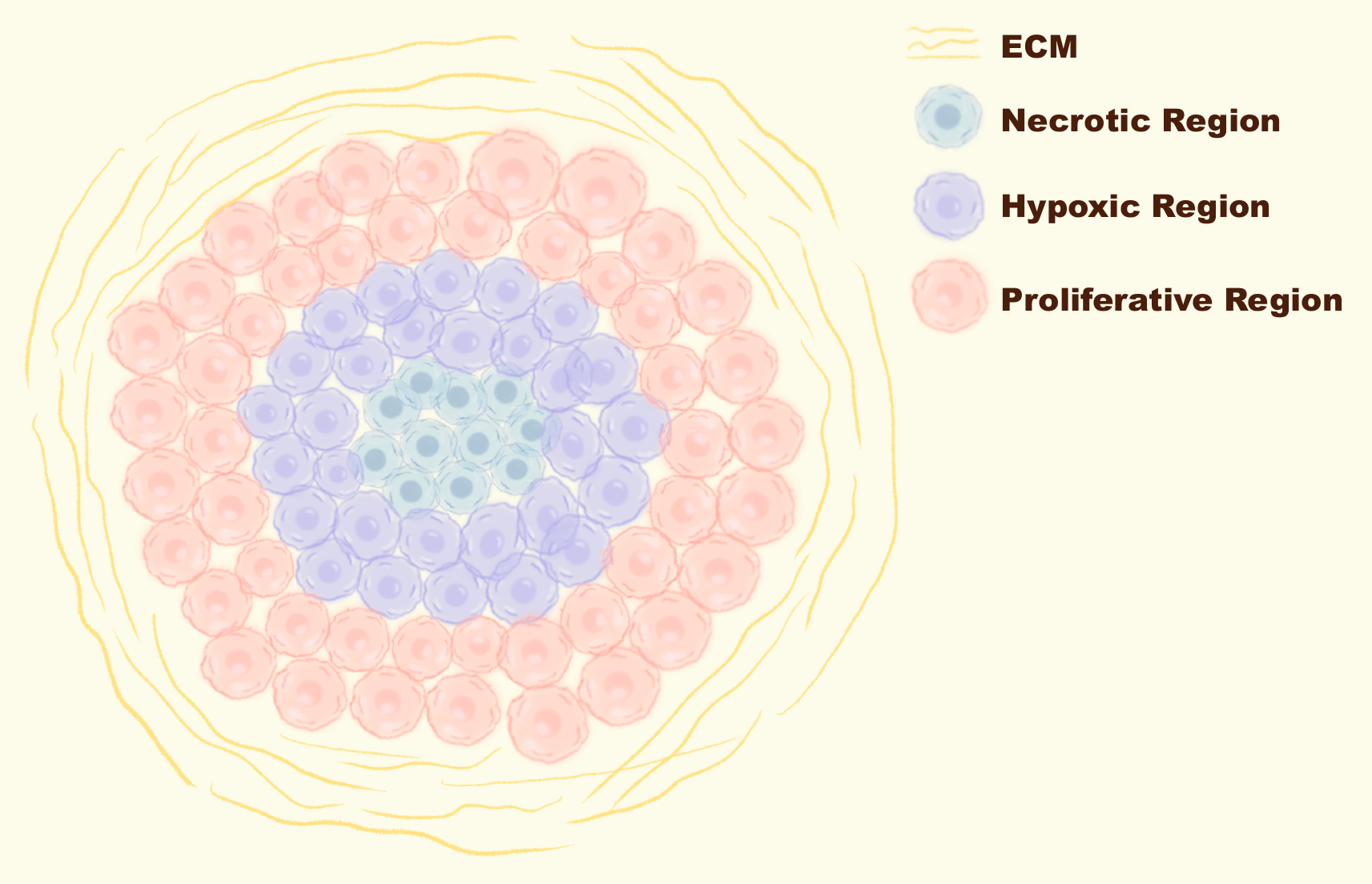}
      \caption{}
      \label{tumor3cell}
  \end{subfigure}
  \caption{Organization of tumor cells and phase of tumor dynamics}
  \label{fig:tumor}
\end{figure}

The dynamics of $\phi_T$ are dictated by several factors. The growth of the tumor adheres to a logistic model, with a proliferation rate $\lambda_T^\mathrm{pro}$. The volume fraction of total tumor cells can experience a decrease due to the natural apoptosis of viable cells, occurring at a rate denoted by $\lambda_T^\mathrm{apo}$, which is a non-negative constant. Consequently, the evolution equation for $\phi_T$ is expressed as the following phase field equation:
\begin{equation*}
\partial_t \phi_T=\operatorname{div} J-\operatorname{div} J_\alpha+\lambda_T^{\mathrm{pro}} \phi_\sigma \phi_V\left(1-\phi_T\right)-\lambda_T^{\mathrm{apo}} \phi_V.
\end{equation*}

In this equation, $J$ is the mass flux and $J_\alpha$ is the adhesion flux, representing the influx of tumor mass due to the haptotaxis effect. The mass flux $J$ turns out to be
$$
  J=m_T\left(\phi_V\right) \nabla \mu,
$$
where $m_T\left(\phi_V\right)$ is the cell mobility matrix defined as $m_T\left(\phi_V\right)=M_T \phi_V^2\left(1-\phi_V\right)^2$ with $M_T>0$. The chemical potential $\mu$ is defined as
\begin{equation*}
\mu=\Psi^{\prime}\left(\phi_T\right)-\varepsilon_T^2 \Delta \phi_T,
\end{equation*}
in which $\Psi$ is a double-well potential, given by $\Psi\left(\phi_T\right)=\bar{E} \phi_T^2\left(1-\phi_T\right)^2$ with $\bar{E}>0$, and $\varepsilon_T$ is a positive parameter associated with surface energy.

Finally, the adhesion flux, denoted as $J_\alpha$, represents a local gradient-based haptotaxis effect, and is defined as
\begin{equation*}
J_\alpha\left(\phi_V, \theta\right)=\chi_H \phi_V  \nabla \theta,
\end{equation*}
where $\chi_H$ represents the haptotaxis parameter and $\theta$ is the density of ECM (discussed in Sec.~\ref{secECM}).

\subsection{Necrotic tumor cell dynamics}

{The volume fraction of necrotic tumor cells, $\phi_N$, is assumed to be non-diffusive, meaning that it evolves over time but does not spread through spatial diffusion.} Its dynamics are described by the following equation:
\begin{equation}\label{phiN}
  \partial_t \phi_N=\lambda_{V N} \mathcal{H}\left(\sigma_{V N}-\phi_\sigma\right) \phi_V.
  \end{equation}
  {Here, $\phi_\sigma$ represents the nutrient concentration, which varies dynamically in both space and time, due to the diffusion process and consumption.} The non-negative parameter $\lambda_{V N}$ controls the rate at which viable tumor cells transit into necrotic cells. Meanwhile,the Heaviside step function, $\mathcal{H}$ is defined as:
\begin{equation}\label{Heaviside}
  \mathcal{H}(x) := \begin{cases}
  1, & x \geq 0, \\
  0, & x < 0.
  \end{cases}
  \end{equation}
   In equation \eqref{phiN}, $\mathcal{H}$ acts as a ``switch'' that regulates necrotic cell formation based on the local nutrient concentration $\phi_\sigma$. {Specifically, when $\phi_\sigma$ drops below the threshold value $\sigma_{V N}$, $\mathcal{H}\left(\sigma_{V N} - \phi_\sigma\right)$ is activated (equals 1), causing an increase in $\phi_N$. This reflects the biological process where insufficient nutrient supply leads to cell death and an expansion of the necrotic region.}

   {Moreover, the necrotic tumor cells volume fraction $\phi_N$ should be less than or equal to the total tumor volume fraction $\phi_T$, and all volume fraction variables $phi_T$, $phi_V$, $phi_N$ are within the range \([0, 1]\).}

\subsection{Critical nutrient dynamics}
The nutrient concentration, denoted as $\phi_\sigma$, includes substances such as oxygen or glucose. Assume the nutrient concentration decreases as it is consumed by viable tumor cells at a positive constant consumption rate $\lambda_\sigma$. For our simulations, we set $\lambda_\sigma = 1.5$. The nutrient concentration $\phi_\sigma=\phi_\sigma(x,t)$ is dictated by a reaction-diffusion equation:
\begin{equation}\label{phisigma}
\partial_{t}\phi_{\sigma} =\frac{M_{\sigma}}{\delta_{\sigma}}\Delta\phi_{\sigma}-\lambda_{\sigma}\phi_{V}\phi_{\sigma},
\end{equation}
where $\delta_\sigma$ and $M_{\sigma}$ are the respective positive constants associated with nutrient diffusion and mobility. {
    It should be noted that the nutrient concentration \(\phi_\sigma\) is non-negative and cannot exceed its initial maximum value \(|\phi_{\sigma}^0|_{\max}\), since nutrients are consumed without external supply.}

\subsection{MDE dynamics}
When the local supply of nutrients $\phi_\sigma$ dips below a certain threshold, tumor cells can transition into a hypoxic state. Then, hypoxic cells secrete enzymes that facilitate cell migration by degrading the ECM. The MDE production rate by viable cells is directly proportional to the nutrient concentrations and the density of ECM, with a rate represented by $\lambda_M^\mathrm{pro}$. The MDE decreases due to a natural decay process at a rate of $\lambda_M^\mathrm{dec}$, and is also affected by the degradation of the ECM, which occurs at a rate of $\lambda_\theta^\mathrm{dec}$. The volume fraction of MDE $\phi_M$ is dictated by the following equation:
\begin{equation}\label{phiM}
\partial_{t}\phi_{M}=M_M\Delta\phi_M-\lambda_M^\mathrm{dec}\phi_M  +\lambda_{M}^{\mathrm{pro}}\phi_{V}\theta\frac{\sigma_{H}}{\sigma_{H}+\phi_{\sigma}}(1-\phi_{M})-\lambda_{\theta}^{\mathrm{dec}}\theta\phi_{M},
\end{equation}
where $M_M$ is a positive constant associated with the mobility. {The volume fraction of MDE, \(\phi_M\), should be constrained within the interval \([0, 1]\).}

\subsection{ECM dynamics}\label{secECM}
The density of the ECM $\theta$, is another variable assumed to be non-diffusive in the model.  Its evolution can be described by a logistic-type equation, reflecting the degradation of ECM caused by specific MDEs. The degradation rate of ECM is controlled by the parameter $\lambda_{\theta}^{\mathrm{deg}}$. The dynamics of $\theta$ are governed by the following equation:
\begin{equation}\label{theta}
\partial_{t}\theta =-\lambda_{\theta}^{\mathrm{deg}}\theta\phi_{M}.
\end{equation}

{Since the model does not account for ECM generation, \(\theta\) is always constrained by \(0 \leq \theta \leq |\theta^0|_{\max}\), meaning that its density cannot exceed the initial maximum value and remains non-negative throughout the degradation process.}

\subsection{The coupled tumor growth system}
{Summarizing the previous sections, the model describes tumor progression by incorporating multiple interacting components, including the total volume fraction of tumor cells \(\phi_T\), the volume fraction of necrotic tumor cells \(\phi_N\), the nutrient concentration \(\phi_\sigma\), the volume fraction of matrix-degrading enzymes (MDEs) \(\phi_M\), and the density of the extracellular matrix (ECM) \(\theta\). } Ultimately, we obtain the following coupled governing system:
\begin{equation}\label{Modelsystem}
\left\{
\begin{aligned}
\partial_t \phi_T &= \operatorname{div} (m_T\left(\phi_V\right)\nabla \mu )-\operatorname{div}J_\alpha\left(\phi_V, \theta\right)+\lambda_T^{\mathrm{pro}} \phi_\sigma \phi_V\left(1-\phi_T\right)-\lambda_T^{\mathrm{apo}} \phi_V , \\
\mu &= \bar{E}\left(4\phi_T^3-6\phi_T^2+2\phi_T\right)-\varepsilon_T^2\Delta\phi_T , \\
\partial_t \phi_N&=\lambda_{V N} \mathcal{H}\left(\sigma_{V N}-\phi_\sigma\right) \phi_V , \\
\partial_{t}\phi_{\sigma}& =\frac{M_{\sigma}}{\delta_{\sigma}}\Delta\phi_{\sigma}-\lambda_{\sigma}\phi_{V}\phi_{\sigma} ,
\\
\partial_{t}\phi_{M}&=M_M\Delta\phi_M-\lambda_M^\mathrm{dec}\phi_M  +\lambda_{M}^{\mathrm{pro}}\phi_{V}\theta\frac{\sigma_{H}}{\sigma_{H}+\phi_{\sigma}}(1-\phi_{M})-\lambda_{\theta}^{\mathrm{dec}}\theta\phi_{M} , \\
\partial_{t}\theta &=-\lambda_{\theta}^{\mathrm{deg}}\theta\phi_{M}.
\end{aligned}
\right.
\end{equation}
with the following initial and homogeneous Neumann boundary conditions, which ensure the steady-state distribution of the model variables at the boundary:
\begin{equation}\label{ICBC}
\begin{aligned}
& \phi_T(0)=\phi_{T,0},\ \phi_\sigma(0)=\phi_{\sigma,0},\ \phi_M(0)=\phi_{M,0},\ \phi_N(0)=\phi_{N,0},\ \theta(0)=\theta_0,
\ \text { in } \Omega,\\
&\quad \frac{\partial \phi_T}{\partial \mathbf{n}}=\frac{\partial \phi_\sigma}{\partial \mathbf{n}}= \frac{\partial \phi_M}{\partial \mathbf{n}}=m_T(\phi_V)\frac{\partial \mu}{\partial \mathbf{n}}-J_{\alpha}(\phi_V,\theta)\cdot\mathbf{n}=0, \quad \text { on } [0, T] \times \partial \Omega,
\end{aligned}
\end{equation}
where $\Omega$ is a bounded domain within the $d$-dimensional real space $\mathbb{R}^d$, $d=\{2,3\}$, with a Lipschitz continuity on the boundary. Moreover, $\mathbf{n}$ represents the outward unit normal vector on the boundary, and \( T > 0 \) represents the final time.

To approximate the Heaviside step function, different smooth functions can be used. Here, we employ a smooth function, as delineated in \cite{liu2022two}, to approximate the Heaviside step function \eqref{Heaviside},

\begin{equation*}
  \mathcal{H}(x) = \frac{1}{2} \Big(1 + \frac{2}{\pi} \arctan \Big(\frac{x}{\epsilon_1} \Big) \Big).
\end{equation*}
In practical simulations, we set $\epsilon_1 = 1e-3$. Alternatively, a sigmoid function can also be used \cite{fritz2019local} for this approximation.

To ensure that $\phi_T$ falls within the range of $[0,1]$, we employ a cut-off operator $\mathscr{C}(\phi_T)=\max(0,\min(1,\phi_T))$. In turn, $\phi_V$ is substituted into the original equation with $\phi_V=\mathscr{C}(\phi_T)-\phi_N$.

Table \ref{param}, shown below, contains all the dimensionless parameter values pertinent to this model system.

\begin{table}[H]
  \centering
   \caption{Parameter values involved in the control system \cite{fritz2019local}}
   \label{param}
  \begin{tabular}{c c @{\hspace{3cm}} c c}
  \hline
  \\[-1.75ex]
      \textbf{Parameter} & \textbf{Value} & \textbf{Parameter} & \textbf{Value}  \\
  \\[-1.75ex]
  \hline
  \\[-2.5ex]
      $\varepsilon_T$ & $0.005$ & $\chi_H$ & $0.001$  \\
      $\bar{E}$ & $0.045$ & $\lambda_T^{\mathrm{pro}}$ & $2$ \\
      $\delta_\sigma$ & $0.01$ & $\lambda_T^{\mathrm{apo}}$ & $0.005$ \\
      $D_\sigma$ & $0.001$ & $\lambda_{V N}$ & $1$ \\
      $M_T$ & $2$ & $\lambda_M^{\mathrm{pro}}$ & $1$ \\
      $M_M$ & $0.1$ & $\lambda_M^{\mathrm{dec}}$ & $1$  \\
      $\sigma_{V N}$ & $0.44$ & $\lambda_\theta^{\mathrm{deg}}$ & $1$  \\
      $\sigma_H$ & $0.6$ & $\lambda_\theta^{\mathrm{dec}}$ & $0.1$ \\
      \\[-2.5ex]
      \hline
  \end{tabular}
\end{table}

\section{The numerical method}

In this section, we propose decoupled and efficient numerical schemes, based on the exponential time differencing (ETD) approach, to solve the coupled tumor growth model \eqref{Modelsystem}.

{Set the time step size as \( \tau = T/N_t \), where \( t_n = n \tau \) for \( n = 0, \ldots, N_t \).} The design of numerical schemes should also present the properties of the maximum bound principle (MBP) for the nutrient concentration and MDE volume fraction, as well as the bound preservation for the ECM density and tumor volume fraction, that is,

\begin{enumerate}[(i).]
  \setlength\itemsep{0em}
  \item For total tumor and necrotic core volume fractions: $0 \leq \phi_N(\cdot, t_n) \leq \phi_T(\cdot, t_n) \leq 1$.
  \item For nutrient concentration: $\phi_\sigma(\cdot, t_n)  \in [0, |\phi_{\sigma}^0|_{\max}]$.
  \item For MDE volume fraction: $\phi_M(\cdot, t_n)  \in [0,1]$.
  \item For ECM density: $\theta(\cdot, t_n)  \in [0, |\theta^0|_{\max}]$.
\end{enumerate}

{
In this section, we use the central finite difference scheme for spatial discretization and choose the ETDRK method and trapezoidal rule for temporal discretization. Additionally, fast computations based on FFT are employed to enhance computational efficiency. After applying a cutoff operator to the updated \(\phi_T\) at each time step, we prove that the ETDRK method preserves the boundedness of \(\phi_\sigma\) and \(\phi_M\). For \(\phi_N\) and \(\theta\), the trapezoidal rule ensures their boundedness under appropriate time step choices (which can be easily satisfied).
}


\subsection{Two-dimensional spatial discretization}\label{space_disc}

For spatial discretization, we apply the central finite difference approximation, using the notation and results for some discrete functions and operators as described in \cite{chen2019positivity,wise2010unconditionally}. A rectangular domain $\Omega$ is considered, defined by $x\in(-1,1)$ and $y\in(-1,1)$, {where $h_{x}=2/m_x$ and $h_{y}=2/n_y$, with $m_x$ and $n_y$ being positive integers. The numerical solutions are denoted as ${\phi_T}_{i, j}(t) \approx {\phi_T}\left(x_i, y_j,t\right)$, ${\phi_\sigma}_{i, j}(t) \approx {\phi_\sigma}\left(x_i, y_j,t\right)$, and ${\phi_M}_{i, j}(t) \approx {\phi_M}\left(x_i, y_j,t\right)$}, applicable for $0 \leq i \leq m_x$ and $0 \leq j \leq n_y$.

Consider the following three sets:
$$
\begin{aligned}
&E_{m}=\{-1+i \cdot h_x \mid i=0, \ldots, m_x\}\\
& C_{m}=\left\{-1+ (i-\frac{1}{2} ) \cdot h_x  \Big\rvert\, i=1, \ldots, m_x\right\}, \\
& C_{\bar{m}}=\left\{-1+ (i-\frac{1}{2} ) \cdot h_x \Big\rvert\, i=0, \ldots, m_x+1\right\} .
\end{aligned}
$$

The points in set $E_{m}$ are called edge-centered points, while the points in sets $C_{m}$ and $C_{\bar{m}}$ are referred as cell-centered points. The points in the set $C_{\bar{m}} \backslash C_{m}$ are the ghost points.  Similar notations could be introduced for the sets $E_{n}$, $C_{n}$, and $C_{\bar{n}}$.

Define the function spaces as follows:
$$
\begin{array}{ll}
\mathcal{C}_{m \times n}=\left\{\phi: C_{m} \times C_{n} \rightarrow \mathbf{R}\right\}, & \mathcal{C}_{\bar{m} \times \bar{n}}=\left\{\phi: C_{\bar{m}} \times C_{\bar{n}} \rightarrow \mathbf{R}\right\}, \\
\mathcal{C}_{\bar{m} \times n}=\left\{\phi: C_{\bar{m}} \times C_{n} \rightarrow \mathbf{R}\right\}, & \mathcal{C}_{m \times \bar{n}}=\left\{\phi: C_{m} \times C_{\bar{n}} \rightarrow \mathbf{R}\right\}, \\
\mathcal{E}_{m \times n}^{\mathrm{ew}}=\left\{f: E_{m} \times C_{n} \rightarrow \mathbf{R}\right\}, & \mathcal{E}_{m \times n}^{\mathrm{ns}}=\left\{f: C_{m} \times E_{n} \rightarrow \mathbf{R}\right\} .
\end{array}
$$

The functions of $\mathcal{C}_{m \times n}, \mathcal{C}_{\bar{m} \times n}, \mathcal{C}_{m \times \bar{n}}$, and $\mathcal{C}_{\bar{m} \times \bar{n}}$ are called the cell-centered functions. These functions are characterized by $\phi_{i, j}:=\phi\left(x_{i}, y_{j}\right)$, where $x_{i}=-1+ (i-\frac{1}{2}) \cdot h_x, y_{j}=-1+ (j-\frac{1}{2} ) \cdot h_y$, with $i$ and $j$ taking integer and half-integer values.

The functions of $\mathcal{E}_{m \times n}^{\mathrm{ew}}$ and $\mathcal{E}_{m \times n}^{\mathrm{ns}}$ are known as east-west and north-south edge-centered functions. The east-west edge-centered functions are denoted by $f_{i+\frac{1}{2}, j}:=f (x_{i+\frac{1}{2}}, y_{j} )$, and the north-south edge-centered functions are denoted by $f_{i, j+\frac{1}{2}}:=f (x_{i}, y_{j+\frac{1}{2}} )$.

The operators $d_{x}: \mathcal{E}_{m \times n}^{\mathrm{ew}} \rightarrow \mathcal{C}_{m \times n}$ and $d_{y}: \mathcal{E}_{m \times n}^{\mathrm{ns}} \rightarrow \mathcal{C}_{m \times n}$ are defined component-wise as edge-to-center difference operators:
$$
d_{x} f_{i, j}=\frac{1}{h_x}\left(f_{i+\frac{1}{2}, j}-f_{i-\frac{1}{2}, j}\right), \quad d_{y} f_{i, j}=\frac{1}{h_y}\left(f_{i, j+\frac{1}{2}}-f_{i, j-\frac{1}{2}}\right), i=1, \ldots, m_x,j=1, \ldots, n_y.
$$
Define the center-to-edge average and difference operators:
$$
A_{x} \phi_{i+\frac{1}{2}, j}=\frac{1}{2}\left(\phi_{i, j}+\phi_{i+1, j}\right), \quad D_{x} \phi_{i+\frac{1}{2}, j}=\frac{1}{h_x}\left(\phi_{i+1, j}-\phi_{i, j}\right),i=0, \ldots, m_x,j=1, \ldots, n_y,
$$
$$
A_{y} \phi_{i, j+\frac{1}{2}}=\frac{1}{2}\left(\phi_{i, j}+\phi_{i, j+1}\right), \quad D_{y} \phi_{i, j+\frac{1}{2}}=\frac{1}{h_y}\left(\phi_{i, j+1}-\phi_{i, j}\right), i=1, \ldots, m_x,j=0, \ldots, n_y ,
$$
where $A_{x}$, $D_{x}$: $\mathcal{C}_{\bar{m} \times n}\rightarrow \mathcal{E}_{m \times n}^{\text {ew }}$, and $A_{y}$, $D_{y}$: $\mathcal{C}_{m \times \bar{n}}\rightarrow \mathcal{E}_{m \times n}^{\text {ns }}$.

The discrete gradient $\nabla_h: \mathcal{C}_{\bar{m} \times \bar{n}} \rightarrow \overrightarrow{\mathcal{E}}_{m \times n}:= \mathcal{E}_{m \times n}^{\mathrm{ew}}\times\mathcal{E}_{m \times n}^{\mathrm{ns}}$ is defined via
$$
\nabla_h \phi_{i, j }= ( D_x \phi_{i+\frac{1}{2}, j}, D_y \phi_{i, j+\frac{1}{2}} ),
$$
and the discrete divergence $\nabla_h \cdot: \overrightarrow{\mathcal{E}}_{m \times n} \rightarrow \mathcal{C}_{m \times n} $ becomes
$$
\nabla_h \cdot f_{i, j }=d_x f_{i, j }^x+d_y f_{i, j }^y,  \quad \mbox{for} \, \, \,
 f=\left(f^x, f^y \right) \in \overrightarrow{\mathcal{E}}_{m \times n} .
$$

The standard 2D discrete Laplacian $\Delta_{h}$, which maps from $\mathcal{C}_{\bar{m} \times \bar{n}}$ to $\mathcal{C}_{m \times n}$, is defined as follows:
\begin{equation}\label{Lap}
\begin{array}{rlr}
&\Delta_{h} \phi_{i, j}  =d_{x}\left(D_{x} \phi\right)_{i, j}+d_{y}\left(D_{y} \phi\right)_{i, j} \\
& =\frac{1}{h_x^{2}}\left(\phi_{i+1, j}+\phi_{i-1, j}-2 \phi_{i, j}\right)+\frac{1}{h_y^{2}}\left(\phi_{i, j+1}+\phi_{i, j-1}-2 \phi_{i, j}\right).
\end{array}
\end{equation}

In the context of the problem of variable mobility, we also have the following discretization:
\begin{equation*}
\begin{aligned}
& \operatorname{div} (M\left(\phi\right) \nabla_h \mu)_{i,j}=d_{x}\left(M\left(A_{x} \phi\right) D_{x} \mu\right)_{i, j}+d_{y}\left(M\left(A_{y} \phi\right) D_{y} \mu\right)_{i, j}.
\end{aligned}
\end{equation*}

{For the numerical simulations, we assume that $m_x=n_y=N$, which implies $h_x=h_y=h$, where $h$ represents the mesh size.}

\subsection{Fully discrete numerical scheme for $\phi_T$}\label{T_full_discret}

We introduce the stabilization constants, denoted as \(\kappa_{T_1}\) and \(\kappa_{T_2}\).

Specifically, we have
\begin{equation}\label{eq_phi_T}
  \partial_t \phi_T+\mathcal{L}_{\kappa_{T}} \phi_T = N_T(\phi_T, \phi_V, \phi_{\sigma}, \theta),
  \end{equation}
\begin{equation*}
\begin{aligned}
\mathcal{L}_{\kappa_{T}}&=-\kappa_{T_1} \Delta+\kappa_{T_2} \Delta^{2},\\
N_T(\phi_T, \phi_V, \phi_{\sigma}, \theta) &= \nabla \cdot(m_T\left(\Phi_V\right) \nabla \mu )-\nabla \cdot \left(\chi\phi_V\nabla\theta\right)\\
&+\lambda_T^{\mathrm{pro}} \phi_\sigma \phi_V \left(1 - \phi_T\right) -\lambda_T^{\mathrm{apo}} \phi_V -\kappa_{T_1} \Delta\phi_T+\kappa_{T_2} \Delta^{2}\phi_T,\\
\mu &= \bar{E}\left(4\phi_T^3-6\phi_T^2+2\phi_T\right)-\varepsilon_T^2\Delta\phi_T.
\end{aligned}
\end{equation*}

After the spatial semi-discretization of the equation \eqref{eq_phi_T} for \(\phi_T\) using the central finite difference method discussed in Sec.~\ref{space_disc}, a system of ordinary differential equations (ODEs) is obtained:
{\begin{equation}\label{kpT}
\frac{d\Phi_T}{dt}+L_{\kappa_{T}, h} \Phi_T = N_T(\Phi_T, \Phi_V, \Phi_{\sigma}, \Theta),
\end{equation}
where \(\Phi_T\) is discretized as a vector over the grid points:
$$
\Phi_T = \left( {\phi_T}_{1,1},\, {\phi_T}_{1,2},\, \ldots,\, {\phi_T}_{1,N},\, {\phi_T}_{2,1},\, {\phi_T}_{2,2},\, \ldots,\, {\phi_T}_{2,N},\,\ldots,\,{\phi_T}_{N,1},\,\ldots,\, {\phi_T}_{N,N} \right)^T.
$$
And \(\Phi_\sigma\), \(\Phi_N\), \(\Phi_V\), \(\Theta\) are discretized in the same way as \(\Phi_T\):
$$
\Phi_\sigma = \left( {\phi_\sigma}_{1,1},\, {\phi_\sigma}_{1,2},\, \ldots,\, {\phi_\sigma}_{N,N} \right)^T, \quad
\Phi_N = \left( {\phi_N}_{1,1},\, {\phi_N}_{1,2},\, \ldots,\, {\phi_N}_{N,N} \right)^T,
$$
$$
\Phi_V = \left( {\phi_V}_{1,1},\, {\phi_V}_{1,2},\, \ldots,\, {\phi_V}_{N,N} \right)^T, \quad
\Theta = \left( {\theta}_{1,1},\, {\theta}_{1,2},\, \ldots,\, {\theta}_{N,N} \right)^T.
$$
The terms \(\Phi_T^3\) and \(\Phi_T^2\) represent the cubic and quadratic terms of the discretized \(\Phi_T\), respectively:
$$
\Phi_T^3 = \left( {\phi_T}_{1,1}^3, \, {\phi_T}_{1,2}^3, \, \dots, \, {\phi_T}_{N,N}^3 \right)^T,\quad \Phi_T^2 = \left( {\phi_T}_{1,1}^2, \, {\phi_T}_{1,2}^2, \, \dots, \, {\phi_T}_{N,N}^2 \right)^T.
$$
\(L_{\kappa_{T}, h}\) denotes the discrete matrix form associated with the spatially discretized operator \(\mathcal{L}_{\kappa_{T}}\), and \(N_T(\Phi_T,\Phi_V, \Phi_{\sigma}, \Theta)\) represents the spatially discretized nonlinear term.
}

{Notice that, to balance the second-order and fourth-order terms in the equation \eqref{eq_phi_T} and to avoid high-frequency oscillations as well as numerical instability arising from the discretization process, we choose the stabilization parameters as \(\kappa_{T_1} = 2 M_T \bar{E}\) and \(\kappa_{T_2} = (C \varepsilon_T)^2\), where \(C\) is a constant. In simulations, we set \(C = 1/8\). }

By applying the {variation of constants method \cite{hartman2002ordinary}} to equation \eqref{kpT}, we then integrate the equation over a single time step from \(t = t_n\) to \(t = t_{n+1}\). The resulting expression turns out to be

\begin{equation}\label{eqintform}
\Phi_T\left(t_{n+1}\right)=\mathrm{e}^{-L_{\kappa_{T}, h}  \tau} \Phi_T\left(t_n\right)+\int_0^\tau \mathrm{e}^{-L_{\kappa_{T}, h}(\tau-s)} N_{T}(\Phi_T(t_n+s),\Phi_V(t_n+s),\Phi_{\sigma}(t_n+s),\Theta(t_n+s)) ds.\quad
\end{equation}

{ The core idea behind the ETD method is to approximate the nonlinear part $N_T$ of the integral in \eqref{eqintform} using interpolation, while accurately evaluating the integral.
}

The fully discrete low-order ETD scheme comes from approximating $N_T(\Phi_T(t_n+s), \Phi_V(t_n+s), \Phi_{\sigma}(t_n+s), \Theta(t_n+s))$ with $N_T(\Phi_T(t_n), \Phi_V(t_n), \Phi_{\sigma}(t_n), \Theta(t_n))$. The detailed form is given by
\begin{equation*}
\Phi_T^{n+1}=\mathrm{e}^{-L_{\kappa_{T}, h}  \tau} \Phi_T^{n}+N_T(\Phi_T^{n}, \Phi_V^{n}, \Phi_{\sigma}^{n}, \Theta^{n})\int_0^\tau \mathrm{e}^{-L_{\kappa_{T}, h} (\tau-s)} ds.
\end{equation*}

Alternatively, the fully discrete ETD Runge-Kutta method is derived by approximating $N_T(\Phi_T(t_n+s), \Phi_V(t_n+s), \Phi_{\sigma}(t_n+s), \Theta(t_n+s))$ with a linear interpolation based on $N_T(\Phi_T(t_n), \Phi_V(t_n), \Phi_{\sigma}(t_n), \Theta(t_n))$ and $N_T(\widetilde\Phi_T^{n+1}, \widetilde\Phi_V^{n+1}, \widetilde\Phi_{\sigma}^{n+1}, \Theta^{n})$, and the form is represented as
\begin{equation*}
\left\{
\begin{aligned}
\widetilde{\Phi}_T^{n+1}  & =\mathrm{e}^{-L_{\kappa_{T}, h}  \tau} \Phi_T^n+N_T(\Phi_T^{n}, \Phi_V^{n}, \Phi_{\sigma}^{n}, \Theta^{n})\int_0^\tau \mathrm{e}^{-L_{\kappa_{T}, h} (\tau-s)} ds, \\
 \Phi_T^{n+1} & = \mathrm{e}^{-L_{\kappa_{T}, h} \tau} \Phi_T^n+\int_0^\tau \mathrm{e}^{-L_{\kappa_{T}, h}(\tau-s)} \Big( (1-\frac{s}{\tau} ) N_T(\Phi_T^{n}, \Phi_V^{n}, \Phi_{\sigma}^{n}, \Theta^{n})
 \\
   &  \qquad \qquad \qquad \qquad
   +\frac{s}{\tau}N_T(\widetilde\Phi_T^{n+1}, \widetilde\Phi_V^{n+1}, \widetilde\Phi_{\sigma}^{n+1}, \Theta^{n}) \Big) ds .
\end{aligned}
\right.
\end{equation*}
 The $\Phi_{\sigma}^{n}$, $\widetilde \Phi_{\sigma}^{n+1}$, $\Phi_{N}^{n}$ and $\Theta^{n}$ will be described in Sec.~\ref{sigma_full_discret}, Sec.~\ref{N_full_discret} and Sec.~\ref{theta_full_discret}.

 In biological modeling, the order parameter $\Phi_T$ should remain within the [0, 1] range, to maintain the physical relevance. Therefore, we apply a cut-off operator to update $\Phi_T^{n+1}$  and $\widetilde\Phi_V^{n+1}=\mathscr{C} (\widetilde\Phi_T^{n+1} )-\Phi_N^{n}$.
 {
  In fact, the solution of the Cahn-Hilliard type evolutionary equation may develop a maximum norm greater than 1 in finite time, as proved in an existing theoretical work~\cite{LiD2017}, a cut-off process turns out to be a useful tool. Therefore, an appearance of a numerical solution of $\Phi_T^{n+1}$ with a maximum norm greater than 1 does not come from the numerical instability; instead, such a numerical behavior is associated with the PDE analysis of this Cahn-Hilliard type equation. On the other hand, since the numerical values of $\widetilde\Phi_T^{n+1}$ usually stay within a slightly deviated range of order $[ - O (\varepsilon), 1 + O (\varepsilon) ]$ in most experiments, as demonstrated in the simulation of Cahn-Hilliard type equation, this approach ensures the numerical solution to meet physical properties and prevents non-physical values in simulations.} This cut-off approach ensures that the numerical schemes for the remaining four variables automatically satisfy the maximum bound principle and preserves the bound.

\subsection{Fully discrete numerical scheme for $\phi_N$}\label{N_full_discret}
After the spatial semi-discretization of the equation \eqref{phiN}, a system of ordinary differential equations (ODEs) based on the discrete grid points is obtained:
{
  \begin{equation}\label{spacephiN}
  \frac{d\Phi_N}{dt}=\lambda_{V N} \mathcal{H}\left(\sigma_{VN}-\Phi_\sigma\right) \odot \Phi_V,
   \end{equation}
   where the symbol \(\odot\) denotes the Hadamard (element-wise) product.
   The trapezoidal rule is used to {construct} the fully discrete scheme for $\Phi_N$:
\begin{equation}\label{TR_phi_N}
  \Phi_{N}^{n+1} = \Phi_{N}^{n} + \frac{\tau}{2} \left( \lambda_{VN}\mathcal{H}(\sigma_{VN}-\Phi_{\sigma}^{n})\odot\left(\Phi_{T}^{n}-\Phi_{N}^{n}\right) + \lambda_{VN}\mathcal{H}(\sigma_{VN}-\widetilde{\Phi}_{\sigma}^{n+1})\odot\left(\Phi_{T}^{n+1}-\Phi_{N}^{n+1}\right) \right),
  \end{equation}}
  where the \(\Phi_{\sigma}^{n}\) and \(\widetilde{\Phi}_{\sigma}^{n+1}\) will be described in Sec.~\ref{sigma_full_discret}.

\begin{theorem}\label{theoremBPN}
  (Discrete bound preservation for $\Phi_N$ of the trapezoidal rule scheme): If the initial values satisfy  \(0\leq\phi_{N_{i,j}}^{0} \leq \phi_{T_{i,j}}^{0}\leq 1\) hold for any \(t_{n+1}\), then the trapezoidal rule scheme \eqref{TR_phi_N} maintains the discrete bound preservation. Specifically, for any time step \(\tau \leq \frac{2}{\lambda_{VN}}\) (which is typically easy to satisfy), the numerical solution \(\phi_{N_{i,j}}^n\) generated by the trapezoidal rule scheme ensures that, for all \(n \geq 0\), we have \(0\leq\phi_{N_{i,j}}^{n} \leq \phi_{T_{i,j}}^{n}\leq 1\) hold for any \(t_{n+1}\).
  \end{theorem}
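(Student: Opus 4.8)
The plan is to argue by induction on $n$, carried out componentwise: since the trapezoidal step \eqref{TR_phi_N} is diagonal over the grid points, it is enough to analyze one scalar recursion per node. Fix a node $(i,j)$ and write $a=\phi_{N,i,j}^{n}$, $b=\phi_{T,i,j}^{n}$, $c=\phi_{T,i,j}^{n+1}$, $x=\phi_{N,i,j}^{n+1}$, and set $p=\lambda_{VN}\mathcal H(\sigma_{VN}-\phi_{\sigma,i,j}^{n})$, $q=\lambda_{VN}\mathcal H(\sigma_{VN}-\widetilde\phi_{\sigma,i,j}^{n+1})$. Since the smoothed Heaviside function employed here satisfies $0<\mathcal H<1$, both coefficients obey $0\le p,q\le \lambda_{VN}$, so the assumption $\tau\le 2/\lambda_{VN}$ forces $0\le \tfrac{\tau p}{2},\tfrac{\tau q}{2}\le 1$. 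With this notation \eqref{TR_phi_N} reads $x=a+\tfrac{\tau}{2}\bigl(p(b-a)+q(c-x)\bigr)$, and because the coefficient of $x$ collected on the left is $1+\tfrac{\tau q}{2}\ge 1>0$, the step is uniquely solvable:
\[
 x=\frac{\bigl(1-\tfrac{\tau p}{2}\bigr)a+\tfrac{\tau p}{2}\,b+\tfrac{\tau q}{2}\,c}{1+\tfrac{\tau q}{2}}.
\]
The induction hypothesis gives $0\le a\le b\le 1$, and the cut-off operator $\mathscr C$ applied when updating $\phi_T$ guarantees $0\le c\le 1$; these are the only facts about the other variables that enter the argument.

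\textbf{The easy bounds.} Non-negativity $x\ge 0$ is immediate from the displayed formula: the weights $1-\tfrac{\tau p}{2}$, $\tfrac{\tau p}{2}$, $\tfrac{\tau q}{2}$ are all $\ge 0$, each of $a,b,c$ is $\ge 0$, and the denominator is positive. For $x\le 1$, observe that $\bigl(1-\tfrac{\tau p}{2}\bigr)+\tfrac{\tau p}{2}=1$, so the numerator is at most $\bigl(1-\tfrac{\tau p}{2}\bigr)\cdot1+\tfrac{\tau p}{2}\cdot1+\tfrac{\tau q}{2}\cdot1=1+\tfrac{\tau q}{2}$, which is exactly the denominator; hence $0\le x\le 1$, i.e.\ $\phi_{N,i,j}^{n+1}\in[0,1]$, and together with the cut-off this re-verifies $\phi_{T,i,j}^{n+1}\le 1$.

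\textbf{The comparison $\phi_N^{n+1}\le\phi_T^{n+1}$ — the main obstacle.} Subtracting the formula for $x$ from $c$ and clearing the denominator yields the compact identity
\[
 \Bigl(1+\tfrac{\tau q}{2}\Bigr)\bigl(c-x\bigr)=c-\Bigl(\bigl(1-\tfrac{\tau p}{2}\bigr)a+\tfrac{\tau p}{2}\,b\Bigr),
\]
and here is where the step-size restriction does its work: because $\tfrac{\tau p}{2}\in[0,1]$, the quantity $\bigl(1-\tfrac{\tau p}{2}\bigr)a+\tfrac{\tau p}{2}b = a+\tfrac{\tau p}{2}(b-a)$ is a convex combination of $a$ and $b$, hence lies in $[a,b]\subseteq[0,1]$. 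Proving $c-x\ge 0$ — equivalently $\phi_{V,i,j}^{n+1}:=\phi_{T,i,j}^{n+1}-\phi_{N,i,j}^{n+1}\ge 0$, which closes the induction — therefore reduces to showing that the post–cut-off value $c=\phi_{T,i,j}^{n+1}$ dominates this convex combination. I expect this coupling between the $\phi_T$-update and the $\phi_N$-update, rather than the elementary algebra of the trapezoidal step, to be the genuine obstacle: the inequality $c\ge a+\tfrac{\tau p}{2}(b-a)$ does not follow from $a\le b$ and $c\in[0,1]$ in isolation, and the argument must invoke the structure of the $\phi_T$ scheme — namely that applying $\mathscr C$ to $\widetilde\phi_T^{n+1}$ does not drop $\phi_T^{n+1}$ below the viable mass $a+\tfrac{\tau p}{2}(b-a)$ already present — to pin down this last comparison and conclude.
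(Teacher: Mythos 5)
Your treatment of the two bounds $0\le\phi_{N_{i,j}}^{n+1}\le 1$ is correct and is, in substance, the paper's own argument: the paper likewise collects the implicit term on the left, uses $\tau\le 2/\lambda_{VN}$ to get $1-\tfrac{\tau p}{2}\ge 0$ and $1+\tfrac{\tau q}{2}>0$, and bounds the right-hand side above by $1+\tfrac{\tau q}{2}$. Your explicit convex-combination formula for $x$ is simply a cleaner packaging of that computation.

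The step you single out as the genuine obstacle is indeed where the proof breaks, and your diagnosis is exactly right. The identity you derive is the paper's \eqref{TR_phi_N2}, and the inequality you need, $c\ge\bigl(1-\tfrac{\tau p}{2}\bigr)a+\tfrac{\tau p}{2}b$, does not follow from $0\le a\le b\le 1$ and $c\in[0,1]$ alone: take $a=0.5$, $b=0.6$, $\tfrac{\tau p}{2}=0.5$, $c=0.3$ (the cut-off only confines $\phi_{T_{i,j}}^{n+1}$ to $[0,1]$; nothing prevents it from dropping below $\phi_{N_{i,j}}^{n}$), and the right-hand side of \eqref{TR_phi_N2} equals $+0.25$, forcing $\phi_{N_{i,j}}^{n+1}>\phi_{T_{i,j}}^{n+1}$. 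You should know, however, that the paper does not close this gap either. Its proof of the comparison is a "contradiction argument" that assumes $\phi_{N_{i,j}}^{n+1}>\phi_{T_{i,j}}^{n+1}$, observes that the left-hand side of \eqref{TR_phi_N2} is then positive, and declares a contradiction on the grounds that "the minimum value of the right-hand side could reach $-1$" --- but a lower bound of $-1$ on the right-hand side is not an upper bound of $0$, so no contradiction is actually derived. The comparison $\phi_{N}^{n+1}\le\phi_{T}^{n+1}$ genuinely requires additional input --- either a structural property of the $\phi_T$-update guaranteeing $\phi_{T_{i,j}}^{n+1}\ge\phi_{N_{i,j}}^{n}+\tfrac{\tau p}{2}\bigl(\phi_{T_{i,j}}^{n}-\phi_{N_{i,j}}^{n}\bigr)$, or an added hypothesis in the theorem --- and neither your proposal nor the paper supplies it. So you have not missed an idea that the paper contains; you have correctly located a defect in its proof.
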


\begin{proof}
    A simplification of equation \eqref{TR_phi_N} gives us an update rule for $\phi_{N_{i,j}}^{n+1}$ :
\begin{equation*}
\begin{aligned}
(1&+\frac{\tau}{2}\lambda_{VN}\mathcal{H}(\sigma_{VN}-\widetilde\phi_{\sigma_{i,j}}^{n+1}))\phi_{N_{i,j}}^{n+1} = (1-\frac{\tau}{2}\lambda_{VN}\mathcal{H}(\sigma_{VN}-\phi_{\sigma_{i,j}}^{n}))\phi_{N_{i,j}}^{n} \\
&+ \frac{\tau}{2} \left( \lambda_{VN}\mathcal{H}(\sigma_{VN}-\phi_{\sigma_{i,j}}^{n})\phi_{T_{i,j}}^{n} + \lambda_{VN}\mathcal{H}(\sigma_{VN}-\widetilde\phi_{\sigma_{i,j}}^{n+1})\phi_{T_{i,j}}^{n+1} \right) ,
\end{aligned}
\end{equation*}
in which $\phi_{N_{i,j}}^{n}=\phi_N(x_i,y_j,t_n)$.

Given that \(\phi_{N_{i,j}}^n\), \(\phi_{T_{i,j}}^n\), and \(\phi_{T_{i,j}}^{n+1}\) lie in \([0, 1]\),  \(\tau \leq \frac{2}{\lambda_{VN}}\), and using the definition of the \(\mathcal{H}\) function, along with the fact that \(\lambda_{VN}\), \(1 + \frac{\tau}{2} \lambda_{VN} \mathcal{H}(\sigma_{VN} - \widetilde{\phi}_{\sigma_{i,j}}^{n+1})\), and \(1 - \frac{\tau}{2} \lambda_{VN} \mathcal{H}(\sigma_{VN} - \phi_{\sigma_{i,j}}^n)\) are all positive, we conclude that both the right-hand side and the coefficient of the left-hand side are non-negative. Therefore, we have \(\phi_{N_{i,j}}^{n+1} \geq 0\). Moreover, we have

\begin{equation}\label{TR_phi_N3}
\begin{aligned}
&(1+\frac{\tau}{2}\lambda_{VN}\mathcal{H}(\sigma_{VN}-\widetilde\phi_{\sigma_{i,j}}^{n+1}))\phi_{N_{i,j}}^{n+1} \\
&\leq 1-\frac{\tau}{2}\lambda_{VN}\mathcal{H}(\sigma_{VN}-\phi_{\sigma_{i,j}}^{n}) + \frac{\tau}{2} \left( \lambda_{VN}\mathcal{H}(\sigma_{VN}-\phi_{\sigma_{i,j}}^{n}) + \lambda_{VN}\mathcal{H}(\sigma_{VN}-\widetilde\phi_{\sigma_{i,j}}^{n+1}) \right) \\
&= 1+\frac{\tau}{2}\lambda_{VN}\mathcal{H}(\sigma_{VN}-\widetilde\phi_{\sigma_{i,j}}^{n+1}) .
\end{aligned}
\end{equation}
from equation \eqref{TR_phi_N3}, we deduce that \(\phi_{N_{i,j}}^{n+1} \leq 1\). Thus, we conclude that: $0 \leq \phi_{N_{i,j}}^{n+1} \leq 1.$

To prove \(\phi_{N_{i,j}}^{n+1} \leq \phi_{T_{i,j}}^{n+1}\), subtract \(\phi_{T_{i,j}}^{n+1}\) from both sides of the equation \eqref{TR_phi_N}:
\begin{equation}\label{TR_phi_N2}
  \begin{aligned}
  & \left(1 + \frac{\tau}{2} \lambda_{VN} \mathcal{H}(\sigma_{VN}-\widetilde\phi_{\sigma_{i,j}}^{n+1})\right) \left(\phi_{N_{i,j}}^{n+1} - \phi_{T_{i,j}}^{n+1}\right) \\
  &= \left(1- \frac{\tau}{2} \lambda_{VN}\mathcal{H}(\sigma_{VN}-\phi_{\sigma_{i,j}}^{n})\right) \phi_{N_{i,j}}^{n} + \frac{\tau}{2} \lambda_{VN}\mathcal{H}(\sigma_{VN}-\phi_{\sigma_{i,j}}^{n}) \phi_{T_{i,j}}^{n} - \phi_{T_{i,j}}^{n+1}
  \end{aligned}
  \end{equation}
A contradiction argument is applied. Assume that there exists \(t_{n+1}\) such that \(\phi_{N_{i,j}}^{n+1} \leq \phi_{T_{i,j}}^{n+1}\). By the definition of the Heaviside function and the non-negativity of \(\tau\) and \(\lambda_{VN}\), the coefficient of \(\phi_{N_{i,j}}^{n+1} - \phi_{T_{i,j}}^{n+1}\) on the left-hand side is always positive. Therefore, the left-hand side of \eqref{TR_phi_N2} is positive. Meanwhile, since \(\phi_{N_{i,j}}^n\), \(\phi_{T_{i,j}}^n\), and \(\phi_{T_{i,j}}^{n+1}\) are all within \([0,1]\), the minimum value of the right-hand side could reach \(-1\). This contradicts the fact that the left-hand side is positive. Therefore, the assumption \(\phi_{N_{i,j}}^{n+1} > \phi_{T_{i,j}}^{n+1}\) has to be false. Therefore, \(0\leq\phi_{N_{i,j}}^{n+1} \leq \phi_{T_{i,j}}^{n+1}\leq 1\) hold for any \(t_{n+1}\).  Furthermore, since \(\phi_{V_{i,j}}^{n+1} = \phi_{T_{i,j}}^{n+1} - \phi_{N_{i,j}}^{n+1}\), we also have \(\phi_{V_{i,j}}^{n+1} \in [0, 1]\). This completes the proof.
\end{proof}


\subsection{Fully discrete numerical scheme for $\phi_{\sigma}$}\label{sigma_discret}

The numerical design is based on the MBP framework for semi-linear elliptic equations, as established by Du et al. \cite{du2021maximum}. By adhering to a specific set of conditions, we are able to apply the existing analysis. This allows us to easily derive the MBP and bound preserving properties. To facilitate the application of this framework, we first adopt the affine transformation, {$\zeta(\gamma): \mathbb{R}^{d}\rightarrow\mathbb{R}^{d}\  (d\leq 3)$}, defined by the following equation:
\begin{equation*}
\zeta(\gamma)=\frac{1}{2}\gamma+\frac{1}{2},  \quad\gamma\in\mathbb{R}^{d}.
\end{equation*}
Given $\phi_\sigma$, by setting $\psi_{\sigma}=\zeta^{-1}(\frac{1}{|\phi_{\sigma}^0|_{\max}}\phi_{\sigma})$, we can obtain an equivalent new equation for $\psi_{\sigma}$:
\begin{equation*}
\partial_t \psi_{\sigma}=\frac{M_{\sigma}}{\delta_\sigma} \Delta \psi_\sigma-\lambda_{\sigma} \phi_V (\psi_\sigma+1).
\end{equation*}
In turn, this formulation transforms the problem of ensuring $\phi_\sigma\in[0,|\phi_{\sigma}^0|_{\max}]$ into the equivalent problem of ensuring $\psi_\sigma\in[-1,1]$.

Let us introduce a stabilization constant $\kappa_{\sigma}>0$. Then the equation takes the form
\begin{equation}\label{kappaeqsigma}
\begin{aligned}
&\partial_t \psi_\sigma+\mathcal{L}_{\kappa_{\sigma}} \psi_\sigma=\mathcal{N}_{\sigma}\psi_\sigma,\\
&\mathcal{L}_{\kappa_{\sigma}}=-\frac{M_{\sigma}}{\delta_\sigma}\Delta+\kappa_{\sigma}\mathcal{I} , \\
&\mathcal{N}_{\sigma}\psi_{\sigma}=\kappa_{\sigma}\psi_{\sigma}+f_{0,\sigma}(\phi_V,\psi_{\sigma}),  \\
&f_{0,\sigma}(\phi_V,\psi_{\sigma})=-\lambda_{\sigma} \phi_V (\psi_\sigma+1) .
\end{aligned}
\end{equation}
where $\mathcal{I}$ is the identity mapping, with the initial and homogeneous Neumann boundary conditions:
\begin{equation*}
\psi_\sigma(0)=2\phi_{\sigma,0}-1,\text { in } \Omega,\qquad \frac{\partial \psi_\sigma}{\partial \mathbf{n}}=0, \text { on } [0, T] \times \partial \Omega,
\end{equation*}
In addition, the stabilization constant $\kappa_{\sigma}$ is selected as
\begin{equation}\label{kappasigma}
\kappa_{\sigma} \geq \max _{|\xi| \leq 1}\left|f_{0,\sigma}^{\prime}(\xi)\right|=\max _{|\xi| \leq 1}|-\lambda_{\sigma} \phi_V|=\lambda_{\sigma}.
\end{equation}

For the nonlinear term $f_{0,\sigma}$, notice that $\phi_V=\mathscr{C}(\phi_T)-\phi_N$ is within the interval $[0,1]$ and $\lambda_\sigma$ is nonnegative. For any $\beta_\sigma\geq 1$, the following inequality holds:
\begin{equation*}
f_{0,\sigma}(\phi_V,\beta_\sigma)=-\lambda_\sigma \phi_V(\beta_\sigma+1)\leq 0\leq  f_{0,\sigma}(\phi_V,-\beta_\sigma)=-\lambda_\sigma \phi_V(-\beta_\sigma+1).
\end{equation*}

Then we have the following result.
\begin{lemma}\label{lemmanonlinearsigma}
 Under  the requirement  \eqref{kappasigma}, when $\phi_V\in[0,1]$ for any $\xi \in[-\beta_\sigma, \beta_\sigma]$, it holds that
\begin{equation*}
\left|\mathcal{N}_{\sigma}[\xi]\right| \leq \kappa_{\sigma} \beta_\sigma.
\end{equation*}
\end{lemma}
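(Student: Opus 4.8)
The plan is to bound $\mathcal{N}_\sigma[\xi] = \kappa_\sigma \xi + f_{0,\sigma}(\phi_V,\xi)$ from above and below separately over the interval $\xi \in [-\beta_\sigma, \beta_\sigma]$, and then combine the two one-sided estimates into the absolute-value bound $|\mathcal{N}_\sigma[\xi]| \le \kappa_\sigma \beta_\sigma$. The central observation is that $\mathcal{N}_\sigma$ is an affine function of $\xi$ (since $f_{0,\sigma}(\phi_V,\xi) = -\lambda_\sigma \phi_V(\xi+1)$ is affine in $\xi$ for fixed $\phi_V$), so its extreme values over the interval are attained at the endpoints $\xi = \pm \beta_\sigma$; it therefore suffices to check the two endpoints.

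First I would show $\mathcal{N}_\sigma[\beta_\sigma] \le \kappa_\sigma \beta_\sigma$. By the displayed inequality immediately preceding the lemma, $f_{0,\sigma}(\phi_V,\beta_\sigma) \le 0$ whenever $\beta_\sigma \ge 1$, hence $\mathcal{N}_\sigma[\beta_\sigma] = \kappa_\sigma \beta_\sigma + f_{0,\sigma}(\phi_V,\beta_\sigma) \le \kappa_\sigma \beta_\sigma$. For the lower bound at the same endpoint, the point is that the derivative of $\mathcal{N}_\sigma$ with respect to $\xi$ is $\kappa_\sigma + f_{0,\sigma}'(\xi) = \kappa_\sigma - \lambda_\sigma \phi_V \ge 0$ by the choice \eqref{kappasigma} together with $\phi_V \in [0,1]$; so $\mathcal{N}_\sigma$ is nondecreasing in $\xi$, and $\mathcal{N}_\sigma[\beta_\sigma] \ge \mathcal{N}_\sigma[-\beta_\sigma]$. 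Symmetrically, at $\xi = -\beta_\sigma$ the preceding displayed inequality gives $f_{0,\sigma}(\phi_V,-\beta_\sigma) \ge 0$, so $\mathcal{N}_\sigma[-\beta_\sigma] = -\kappa_\sigma \beta_\sigma + f_{0,\sigma}(\phi_V,-\beta_\sigma) \ge -\kappa_\sigma \beta_\sigma$.

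Then I would assemble: for any $\xi \in [-\beta_\sigma,\beta_\sigma]$, monotonicity of $\mathcal{N}_\sigma$ gives $\mathcal{N}_\sigma[-\beta_\sigma] \le \mathcal{N}_\sigma[\xi] \le \mathcal{N}_\sigma[\beta_\sigma]$, and combining with the endpoint estimates yields $-\kappa_\sigma \beta_\sigma \le \mathcal{N}_\sigma[\xi] \le \kappa_\sigma \beta_\sigma$, i.e.\ $|\mathcal{N}_\sigma[\xi]| \le \kappa_\sigma \beta_\sigma$. This is exactly the claim; it essentially mirrors the generic nonlinear-term estimate in the MBP framework of Du et al.\ \cite{du2021maximum}, specialized to the linear reaction $f_{0,\sigma}$.

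There is no real obstacle here — the only thing to be careful about is that $\phi_V$ should be treated as a fixed parameter in $[0,1]$ (guaranteed since $\phi_V = \mathscr{C}(\phi_T) - \phi_N \in [0,1]$ by construction and by Theorem~\ref{theoremBPN}), so that all quantities $\kappa_\sigma - \lambda_\sigma \phi_V$ and $\lambda_\sigma \phi_V$ have fixed signs and the affine/monotonicity argument goes through pointwise in space. One could alternatively avoid invoking monotonicity and instead bound $|\mathcal{N}_\sigma[\xi]| \le \kappa_\sigma|\xi| + |f_{0,\sigma}(\phi_V,\xi)| \le \kappa_\sigma \beta_\sigma + \lambda_\sigma \phi_V(\beta_\sigma + 1)$, but that crude triangle-inequality route does not close, which is precisely why the sign structure of $f_{0,\sigma}$ at the endpoints (the displayed inequality before the lemma) must be used.
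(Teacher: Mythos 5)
Your argument is correct and is exactly the standard estimate from the MBP framework of Du et al.\ that the paper invokes here without writing out: the condition \eqref{kappasigma} makes $\xi \mapsto \kappa_\sigma\xi + f_{0,\sigma}(\phi_V,\xi)$ nondecreasing, and the displayed sign inequalities at $\xi=\pm\beta_\sigma$ pin down the two endpoint values, giving $-\kappa_\sigma\beta_\sigma \le \mathcal{N}_\sigma[\xi] \le \kappa_\sigma\beta_\sigma$. The paper states the lemma with only these two ingredients and no separate proof, so your write-up supplies precisely the intended argument.
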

In this study, we set $\beta_\sigma=1$.

\subsubsection{The fully discrete scheme}\label{sigma_full_discret}

Applying the central finite difference method to \eqref{kappaeqsigma} yields a system of ordinary differential equations (ODEs):
{
\begin{equation*}
  \frac{d\Psi_\sigma}{dt}+L_{\kappa_{\sigma,h}} \Psi_\sigma =N_{\sigma}(\Phi_V, \Psi_\sigma),
  \end{equation*}
where \(L_{\kappa_{\sigma}, h}\) represents the discrete matrix form of the spatially discretized operator \(\mathcal{L}_{\kappa_{\sigma}}\), and \(N_{\sigma}(\Phi_V, \Psi_\sigma)\) denotes the spatially discretized nonlinear term.
}

By employing the ETD schemes provided in Sec.~\ref{T_full_discret}, the fully discrete low-order ETD scheme is derived:

{
  \begin{equation}\label{sigmaETD1}
  \Psi_{\sigma}^{n+1}=\mathrm{e}^{-L_{\kappa_{\sigma}, h}  \tau} \Psi_{\sigma}^{n}+N_{\sigma}(\Phi_V^n, \Psi_{\sigma}^n)\int_0^\tau \mathrm{e}^{-L_{\kappa_{\sigma},  h} (\tau-s)} ds.
  \end{equation}}

Moreover, the fully discrete ETD Runge Kutta scheme takes the form of

{\begin{equation}\label{sigmaETDRK2}
  \left\{
  \begin{aligned}
  \quad \widetilde{\Psi}_{\sigma}^{n+1} & = \mathrm{e}^{-L_{\kappa_{\sigma}, h}  \tau} \Psi_{\sigma}^n+N_{\sigma}(\Phi_V^n, \Psi_{\sigma}^n)\int_0^\tau \mathrm{e}^{-L_{\kappa_{\sigma}, h} (\tau-s)} ds, \\
  \quad \Psi_{\sigma}^{n+1} & =  \mathrm{e}^{-L_{\kappa_{\sigma}, h} \tau} \Psi_{\sigma}^n+\int_0^\tau \mathrm{e}^{-L_{\kappa_{\sigma}, h}(\tau-s)}\left(\left(1-\frac{s}{\tau}\right) N_{\sigma}(\Phi_V^n, \Psi_{\sigma}^n)+\frac{s}{\tau}N_{\sigma}(\widetilde{\Phi}_V^{n+1}, \widetilde{\Psi}_{\sigma}^{n+1})\right) ds.
  \end{aligned}
  \right.
  \end{equation}}

Hence, the numerical solutions of equation \eqref{phisigma} can be deduced from the subsequent relationships:

{\begin{equation}\label{relationsigma}
  \widetilde{\Phi}_{\sigma}^{n+1}=|\Phi_{\sigma}^0|_{\max}\frac{\widetilde{\Psi}_{\sigma}^{n+1}+1}{2},  \quad \Phi_{\sigma}^n=|\Phi_{\sigma}^0|_{\max}\frac{\Psi_{\sigma}^n+1}{2},
  \end{equation}}

\subsubsection{Discrete MBP for $\psi_\sigma$}

In order to prove the discrete MBP property, we need the following lemma.

\begin{lemma}[\cite{du2021maximum}]\label{lemmaexp}
  The Laplacian operator $\Delta$ generates a contraction semigroup $\left\{\mathrm{e}^{t\Delta}\right\}_{t \geq 0}$ on $C(\bar{\Omega})$, where $\bar{\Omega} = \Omega \cup \partial \Omega$, and the functions $u$ in $C(\bar{\Omega})$ are continuous on the closed domain and {satisfy the homogeneous Neumann boundary condition.} For any $\alpha \geq 0$, it holds that
  $$
  \left\|\mathrm{e}^{t(\Delta-\alpha)} u\right\| \leq \mathrm{e}^{-\alpha t}\|u\|, \quad t \geq 0,
  $$
  \end{lemma}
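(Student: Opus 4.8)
The plan is to peel off the parameter $\alpha$ by a commuting-scalar factorization, reducing everything to the pure contraction property $\|\mathrm{e}^{t\Delta}u\|\le\|u\|$, and then to obtain that property together with the generation statement from the Lumer--Phillips theorem, whose dissipativity hypothesis is verified by a pointwise argument at an extremum of $u$. First I would handle $\alpha$. Since $\alpha\mathcal{I}$ is a bounded operator that is a scalar multiple of the identity, it commutes with $\Delta$, so the semigroup generated by $\Delta-\alpha$ factors as $\mathrm{e}^{t(\Delta-\alpha)}=\mathrm{e}^{-\alpha t}\,\mathrm{e}^{t\Delta}$. Consequently
\[
\bigl\|\mathrm{e}^{t(\Delta-\alpha)}u\bigr\|=\mathrm{e}^{-\alpha t}\,\bigl\|\mathrm{e}^{t\Delta}u\bigr\|,
\]
and for $\alpha\ge 0$ the claimed estimate follows at once from $\|\mathrm{e}^{t\Delta}u\|\le\|u\|$. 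Thus the entire content reduces to the case $\alpha=0$, namely that $\{\mathrm{e}^{t\Delta}\}_{t\ge0}$ is a contraction semigroup on $C(\bar\Omega)$.

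For the generation and contraction, I would set $A=\Delta$ on the domain $D(A)=\{u\in C(\bar\Omega):\Delta u\in C(\bar\Omega),\ \partial u/\partial\mathbf{n}=0\text{ on }\partial\Omega\}$ and apply Lumer--Phillips. The key step is dissipativity in the sup norm: given $u\in D(A)$, choose $x_0\in\bar\Omega$ with $|u(x_0)|=\|u\|$, and assume without loss of generality $u(x_0)=\|u\|>0$. I claim $\Delta u(x_0)\le 0$. If $x_0$ is interior, this is the standard second-derivative test at an interior maximum. If $x_0\in\partial\Omega$, the tangential contribution to $\Delta u(x_0)$ is $\le 0$ because $x_0$ is a maximum along the boundary, while for the normal contribution I would look at $g(s)=u(x_0-s\mathbf{n})$ for $s\ge 0$: the homogeneous Neumann condition gives $g'(0)=-\partial u/\partial\mathbf{n}(x_0)=0$, and since $g(s)\le g(0)$ a one-sided Taylor expansion forces $\partial^2 u/\partial\mathbf{n}^2(x_0)=g''(0)\le 0$. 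Hence $\Delta u(x_0)\le 0$, and for $\lambda>0$,
\[
\|\lambda u-Au\|\ge \lambda u(x_0)-\Delta u(x_0)\ge \lambda u(x_0)=\lambda\|u\|,
\]
which is exactly dissipativity. Density of $D(A)$ and surjectivity of $\lambda I-\Delta$ onto $C(\bar\Omega)$ (solvability of the elliptic resolvent problem $\lambda u-\Delta u=f$ under the Neumann condition) are classical facts of elliptic theory; Lumer--Phillips then delivers the contraction semigroup. An equivalent route, if one prefers to avoid the abstract machinery, is to write $u(\cdot,t)=\mathrm{e}^{t\Delta}u_0$ as the solution of the heat equation with Neumann data and invoke the parabolic maximum principle directly: $\max_{\bar\Omega}u(\cdot,t)$ is nonincreasing and $\min_{\bar\Omega}u(\cdot,t)$ is nondecreasing in $t$, whence $\|u(\cdot,t)\|\le\|u_0\|$.

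The main obstacle is the boundary extremum. Because $C(\bar\Omega)$ is not a Hilbert space, the convenient $L^2$ integration-by-parts proof of dissipativity is unavailable, so one is forced to work through the extremum (duality-map) characterization and to confront the possibility that the maximum of $|u|$ occurs on $\partial\Omega$; the Neumann condition together with the one-sided expansion above is precisely the Hopf-type ingredient that rescues the inequality $\Delta u(x_0)\le0$ in that case. A secondary technical point is the range condition, which rests on nontrivial elliptic regularity for the Neumann problem. Since the statement is quoted from \cite{du2021maximum}, one may alternatively defer these classical verifications to that reference and simply record the factorization argument that yields the $\alpha$-dependent bound.
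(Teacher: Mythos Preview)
The paper does not prove this lemma at all; it is quoted verbatim from \cite{du2021maximum} and used as a black box, so there is no ``paper's own proof'' to compare against. Your proposal therefore supplies strictly more than the paper does. The reduction via the commuting factorization $\mathrm{e}^{t(\Delta-\alpha)}=\mathrm{e}^{-\alpha t}\,\mathrm{e}^{t\Delta}$ is exactly right, and both the Lumer--Phillips route and the parabolic maximum-principle alternative are standard and correct ways to obtain the $\alpha=0$ contraction. One small caution: the pointwise second-derivative argument at the extremum $x_0$ implicitly assumes $u\in C^2(\bar\Omega)$, whereas $u\in D(A)$ only gives $\Delta u\in C(\bar\Omega)$; in full rigor one either works first on a dense $C^2$ core and extends, or replaces the pointwise computation by the elliptic maximum principle for the resolvent. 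You already acknowledge these technicalities and propose deferring them to the cited reference, which is precisely what the paper itself does.
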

  As indicated in \cite{du2021maximum}, the spatially discrete approximation to the Laplacian \(\Delta\), namely $\Delta_h$,  satisfies the conditions of Lemma \ref{lemmaexp}. Therefore, based on Lemma \ref{lemmaexp}, we can obtain the following discrete MBP theorem.

\begin{theorem}\label{theoremMBPsigma}
(Discrete MBP for \(\Psi_\sigma\) of ETD schemes): If $\|\psi_{\sigma}(\cdot,t_0) \|_{\infty}\leq 1$ and \(\kappa_{\sigma}\) satisfies \eqref{kappasigma}, then the ETD scheme \eqref{sigmaETD1} and the ETDRK scheme \eqref{sigmaETDRK2} unconditionally preserve the discrete MBP. In other words, for any time step size \(\tau>0\), the solutions satisfy:
 $\|\Psi_{\sigma}^n\|_{\infty} \leq 1$, for any $n \geq 0$.
\end{theorem}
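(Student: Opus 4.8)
The plan is to prove $\|\Psi_\sigma^n\|_\infty \le 1$ by induction on $n$, following the exponential-integrator framework of Du et al.\ \cite{du2021maximum} adapted to the fully discrete setting. The base case $n=0$ is immediate from the hypothesis $\|\psi_\sigma(\cdot,t_0)\|_\infty \le 1$. For the inductive step I would assemble two ingredients. First, a contraction estimate for the discrete solution operator: since $L_{\kappa_\sigma,h} = -\tfrac{M_\sigma}{\delta_\sigma}\Delta_h + \kappa_\sigma I$ with $\tfrac{M_\sigma}{\delta_\sigma}>0$, and $\tfrac{M_\sigma}{\delta_\sigma}\Delta_h$ inherits the contraction-semigroup property of Lemma~\ref{lemmaexp} (as noted in the remark following that lemma, with a trivial time rescaling), applying Lemma~\ref{lemmaexp} with $\alpha=\kappa_\sigma$ gives $\|\mathrm{e}^{-L_{\kappa_\sigma,h}t}v\|_\infty \le \mathrm{e}^{-\kappa_\sigma t}\|v\|_\infty$ for all $t\ge 0$. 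Second, whenever $\|\Psi_\sigma^n\|_\infty \le 1$ the companion discrete viable fraction $\Phi_V^n = \mathscr{C}(\Phi_T^n)-\Phi_N^n$ lies in $[0,1]$ componentwise (the cut-off forces $\mathscr{C}(\Phi_T^n)\in[0,1]$ and Theorem~\ref{theoremBPN} gives $0\le\Phi_N^n\le\Phi_T^n$), so Lemma~\ref{lemmanonlinearsigma} with $\beta_\sigma=1$ yields $\|N_\sigma(\Phi_V^n,\Psi_\sigma^n)\|_\infty \le \kappa_\sigma$.

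With these, the bound for the ETD1 scheme \eqref{sigmaETD1} is a direct triangle-inequality computation. Taking the discrete maximum norm and using that $N_\sigma(\Phi_V^n,\Psi_\sigma^n)$ is independent of $s$,
\begin{align*}
\|\Psi_\sigma^{n+1}\|_\infty
&\le \|\mathrm{e}^{-L_{\kappa_\sigma,h}\tau}\Psi_\sigma^n\|_\infty + \int_0^\tau \big\|\mathrm{e}^{-L_{\kappa_\sigma,h}(\tau-s)} N_\sigma(\Phi_V^n,\Psi_\sigma^n)\big\|_\infty\, ds \\
&\le \mathrm{e}^{-\kappa_\sigma\tau}\|\Psi_\sigma^n\|_\infty + \kappa_\sigma \int_0^\tau \mathrm{e}^{-\kappa_\sigma(\tau-s)}\, ds
= \mathrm{e}^{-\kappa_\sigma\tau} + \big(1 - \mathrm{e}^{-\kappa_\sigma\tau}\big) = 1 ,
\end{align*}
which closes the induction for any $\tau>0$.

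For the ETDRK scheme \eqref{sigmaETDRK2} I would note that the intermediate stage $\widetilde\Psi_\sigma^{n+1}$ is exactly one ETD1 step, so the previous display gives $\|\widetilde\Psi_\sigma^{n+1}\|_\infty \le 1$; then $\widetilde\Phi_V^{n+1}=\mathscr{C}(\widetilde\Phi_T^{n+1})-\Phi_N^n$ together with Lemma~\ref{lemmanonlinearsigma} gives $\|N_\sigma(\widetilde\Phi_V^{n+1},\widetilde\Psi_\sigma^{n+1})\|_\infty \le \kappa_\sigma$. Taking norms in the second line of \eqref{sigmaETDRK2} and using that the interpolation weights sum to one, $(1-\tfrac{s}{\tau})+\tfrac{s}{\tau}=1$,
\begin{align*}
\|\Psi_\sigma^{n+1}\|_\infty
&\le \mathrm{e}^{-\kappa_\sigma\tau}\|\Psi_\sigma^n\|_\infty + \int_0^\tau \mathrm{e}^{-\kappa_\sigma(\tau-s)}\Big( \big(1-\tfrac{s}{\tau}\big)\kappa_\sigma + \tfrac{s}{\tau}\kappa_\sigma \Big)\, ds \\
&= \mathrm{e}^{-\kappa_\sigma\tau} + \kappa_\sigma\int_0^\tau \mathrm{e}^{-\kappa_\sigma(\tau-s)}\, ds = 1 ,
\end{align*}
again for any $\tau>0$. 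Translating back through \eqref{relationsigma} then recovers $\Phi_\sigma^n\in[0,|\Phi_\sigma^0|_{\max}]$.

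The individual estimates are routine; the two points demanding care are (i) confirming that the discrete Laplacian with homogeneous Neumann conditions genuinely satisfies Lemma~\ref{lemmaexp} in the discrete $\infty$-norm, so that $\|\mathrm{e}^{-L_{\kappa_\sigma,h}t}\|_\infty \le \mathrm{e}^{-\kappa_\sigma t}$ holds exactly, and (ii) threading the coupling so that the hypothesis $\Phi_V\in[0,1]$ of Lemma~\ref{lemmanonlinearsigma} is legitimately available at both the $t_n$ level and the intermediate stage — this relies on the cut-off applied to $\Phi_T$ and on Theorem~\ref{theoremBPN}. Both are supplied by results already in place, so the remainder is the norm bookkeeping shown above.
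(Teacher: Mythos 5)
Your proposal is correct and follows essentially the same route as the paper: induction on $n$, the contraction bound $\|\mathrm{e}^{-L_{\kappa_\sigma,h}t}\|_\infty\le \mathrm{e}^{-\kappa_\sigma t}$ from Lemma~\ref{lemmaexp}, the nonlinearity bound $\|N_\sigma\|_\infty\le\kappa_\sigma$ from Lemma~\ref{lemmanonlinearsigma}, and the identical integral evaluations for the ETD1 and ETDRK stages. Your explicit remark on why $\Phi_V\in[0,1]$ is legitimately available (via the cut-off on $\Phi_T$ and Theorem~\ref{theoremBPN}) is a point the paper leaves implicit, but it does not change the argument.
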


\begin{proof}
Since $\|\psi_{\sigma}(\cdot,t_0) \|_{\infty}\leq 1$, it suffices to prove the theorem for the case $\|\Psi_{\sigma}^n\|_{\infty} \leq 1$ and then deduce  $\|\Psi_{\sigma}^{n+1}\|_{\infty} \leq 1$ for any $n$.

For the ETD scheme \eqref{sigmaETD1}, an application of Lemmas \ref{lemmanonlinearsigma} and \ref{lemmaexp}, as well as the fact that $\|\Psi_{\sigma}^n\|_{\infty} \leq 1$, implies that
{
\begin{equation}\label{thetd1sigma}
  \begin{aligned}
    \|\Psi_{\sigma}^{n+1}\|_{\infty} & \leq \|\mathrm{e}^{-L_{\kappa_{\sigma}, h} \tau}\|_{\infty} \|\Psi_{\sigma}^n\|_{\infty} + \int_0^\tau \|\mathrm{e}^{-L_{\kappa_{\sigma}, h} (\tau-s)}\|_{\infty} \|N_{\sigma}(\Phi_V^n, \Psi_{\sigma}^n)\|_{\infty} ds, \\
    & \leq \mathrm{e}^{-\kappa_{\sigma} \tau} + \int_0^\tau \mathrm{e}^{-\kappa_{\sigma}(\tau-s)} \kappa_{\sigma} ds =\mathrm{e}^{-\kappa_{\sigma} \tau}+ \frac{1-\mathrm{e}^{-\kappa_{\sigma} \tau}}{\kappa_{\sigma}}\kappa_{\sigma}=1.
    \end{aligned}
  \end{equation}
}

Similarly, for the ETDRK scheme \eqref{sigmaETDRK2}, an application of the two lemmas, combined with the inequalities that $\|\Psi_{\sigma}^n \|_{\infty} \leq 1$, $\|\widetilde\Psi_{\sigma}^{n+1} \|_{\infty} \le 1$ (based on \eqref{thetd1sigma}), gives

{$$
\begin{aligned}
\left\|\Psi_{\sigma}^{n+1}\right\|_{\infty} & \leq\left\|\mathrm{e}^{-L_{\kappa_{\sigma}, h}  \tau} \right\|_{\infty} \left\|\Psi_{\sigma}^n\right\|_{\infty}\\
&\quad+\int_0^\tau \|\mathrm{e}^{-L_{\kappa_{\sigma}, h} (\tau-s)} \|_{\infty} \Big\| (1-\frac{s}{\tau} ) N_{\sigma}(\Phi_V^n, \Psi_{\sigma}^n)+\frac{s}{\tau}N_{\sigma}(\widetilde\Phi_V^{n+1}, \widetilde\Psi_{\sigma}^{n+1}) \Big\|_{\infty} ds, \\
& \leq \mathrm{e}^{-\kappa_{\sigma} \tau}+\int_0^\tau \mathrm{e}^{- \kappa_{\sigma}(\tau-s)} \Big( (1-\frac{s}{\tau} ) \kappa_{\sigma}+\frac{s}{\tau}\kappa_{\sigma} \Big) ds,\\
& =\mathrm{e}^{-\kappa_{\sigma} \tau}+ \frac{1-\mathrm{e}^{-\kappa_{\sigma} \tau}}{\kappa_{\sigma}}\kappa_{\sigma}=1.
\end{aligned}
$$}
This completes the proof.
\end{proof}

Thanks to the \eqref{relationsigma} and Theorem \ref{theoremMBPsigma}, it is obvious that if $\phi_{\sigma}(\cdot,t_0) \in[0,|\phi_{\sigma}^0|_{\max}]$ (or equivalently $\|\psi_{\sigma}(\cdot,t_0)\|_{\infty} \leq 1)$, then the numerical solution satisfies $\phi_{\sigma}(\cdot,t)\in[0,|\phi_{\sigma}^0|_{\max}]$ for all $t > 0$.

{
  \begin{remark}\label{remarkMBPsigma}
    Note that the introduction of the stabilization constant  \(\kappa_\sigma\) is to ensure the validity of the maximum bound principle (MBP). When \(\kappa_\sigma\) satisfies the conditions in Theorem \ref{theoremMBPsigma}, the MBP guarantees the boundedness of the numerical solution. As a measure of numerical stability, the MBP ensures that the solution does not blow up during the computation, which is particularly important when large time steps and large-scale computations are used.
  \end{remark}
      }
\subsection{Fully discrete numerical scheme for $\phi_M$}

Similar to the nutrient concentration equation outlined in Sec.~\ref{sigma_discret}, by setting $\psi_{M}=\zeta^{-1}(\phi_{M})$, in which $\phi_M$ is the volume fraction of MDE, we obtain an equivalent equation for $\psi_M$:
\begin{equation*}
\partial_t \psi_{M}=M_M \Delta \psi_M -\lambda_M^{\mathrm{dec}} (\psi_M+1)+\lambda_M^{\mathrm{pro}} \phi_V \theta \frac{\sigma_H}{\sigma_H+\phi_\sigma}\left(1-\psi_M\right)-\lambda_\theta^{\mathrm{dec}} \theta (\psi_M+1).
\end{equation*}
This transforms the problem of ensuring $\phi_M\in[0,1]$ into an equivalent problem of ensuring $\psi_M\in[-1,1]$.

An introduction of a stabilization constant $\kappa_{M}>0$ gives
\begin{equation}\label{kappaeqM}
\partial_t \psi_M+\mathcal{L}_{\kappa_{M}} \psi_M=\mathcal{N}_{M}\psi_M,
\end{equation}
where
\begin{equation*}
\begin{aligned}
&\mathcal{L}_{\kappa_{M}}=-M_M\Delta+\kappa_{M}\mathcal{I},
 \quad \mbox{$\mathcal{I}$ is the identity mapping} , \\
&\mathcal{N}_{M}\psi_M=\kappa_{M}\psi_M+f_{0,M}(\phi_V,\psi_{M},\phi_{\sigma},\theta),\\
&f_{0,M}(\phi_V,\psi_{M},\phi_{\sigma},\theta)=-\lambda_M^{\mathrm{dec}} (\psi_M+1)+\lambda_M^{\mathrm{pro}} \phi_V \theta \frac{\sigma_H}{\sigma_H+\phi_\sigma}\left(1-\psi_M\right)-\lambda_\theta^{\mathrm{dec}} \theta (\psi_M+1) .
\end{aligned}
\end{equation*}
with the initial and homogeneous Neumann boundary conditions:
\begin{equation*}
\psi_M(0)=2\phi_{M,0}-1, \text { in } \Omega,\qquad \frac{\partial \psi_M}{\partial \mathbf{n}}=0, \text { on } [0, T] \times \partial \Omega,
\end{equation*}

The stabilization constant $\kappa_{M}$ should be chosen as follows:
\begin{equation}\label{kappaM}
\begin{aligned}
\kappa_{M} \geq \max _{|\xi| \leq 1}\left|f_{0,M}^{\prime}(\xi)\right|= & \max _{|\xi| \leq 1}|-\lambda_M^{\mathrm{dec}} -\lambda_M^{\mathrm{pro}} \phi_V \theta \frac{\sigma_H}{\sigma_H+\phi_\sigma}-\lambda_\theta^{\mathrm{dec}} \theta|
\\
  = & \lambda_M^{\mathrm{dec}} +\lambda_M^{\mathrm{pro}}|\theta|_{\max}+\lambda_\theta^{\mathrm{dec}}|\theta|_{\max} .
\end{aligned}
\end{equation}

{The choice of \(\kappa_M\) is similar to \(\kappa_\sigma\), as described in \(\text{Remark \ref{remarkMBPsigma}}\).}

Regarding the nonlinear term $f_{0,M}$, it is noticed that if the initial condition $\theta_0 >0$ and $\phi_M > 0$, then $\theta >0$. A contradiction argument is applied to prove this fact. Suppose there exists $t^*$ such that $\phi_M(t^*) > 0$ and $\theta(t^*) \leq 0$. By the evolutionary equation \eqref{theta}, the derivative of $\theta$ is also greater than zero, i.e., $\theta$ is increasing. Therefore, we see that $\theta(t_0) \leq \theta(t_1) \leq ... \leq \theta(t^*) \leq 0$, which contradicts the fact that $\theta_0 \in [0,1]$. Hence, the assumption becomes invalid, so that $\theta >0$ whenever $\phi_M > 0$. Given $\phi_V,\phi_{\sigma}\in[0,1]$, the parameters $\lambda_M^{\mathrm{dec}}$, $\lambda_M^{\mathrm{pro}}$, $\sigma_H$, and $\lambda_\theta^{\mathrm{dec}}$ are all non-negative. As a result, for any $\beta_M$ such that $\beta_M\geq 1$, the following inequality holds:
\begin{equation*}
f_{0,M}(\phi_V,\beta_M,\phi_{\sigma},\theta)\leq 0 \leq  f_{0,M}(\phi_V,-\beta_M,\phi_{\sigma},\theta).
\end{equation*}

Then we have the following lemma.
\begin{lemma}\label{lemmanonlinearM}
 Under  the requirement  \eqref{kappaM}, for any $\xi \in[-\beta_M, \beta_M]$, if $\phi_V$, $\phi_\sigma \in [0, 1]$, it holds that
\begin{equation*}
\left|\mathcal{N}_{M}[\xi]\right| \leq \kappa_{M} \beta_M.
\end{equation*}
\end{lemma}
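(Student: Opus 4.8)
The plan is to mirror the monotonicity argument already used for Lemma~\ref{lemmanonlinearsigma}. Write $\mathcal{N}_{M}[\xi] = \kappa_{M}\xi + f_{0,M}(\phi_V,\xi,\phi_\sigma,\theta)$ and, for fixed $\phi_V,\phi_\sigma,\theta$, set $g(\xi) := \mathcal{N}_{M}[\xi]$. First I would record the two structural facts already in hand in the excerpt: (a) the ECM density stays in $[0,|\theta|_{\max}]$ --- nonnegativity is exactly the contradiction argument stated just before the lemma, and the upper bound follows since $\partial_t\theta = -\lambda_\theta^{\mathrm{deg}}\theta\phi_M \le 0$ once $\theta,\phi_M\ge 0$, so $\theta$ is nonincreasing; and (b) for every $\beta_M\ge 1$ one has $f_{0,M}(\phi_V,\beta_M,\phi_\sigma,\theta)\le 0\le f_{0,M}(\phi_V,-\beta_M,\phi_\sigma,\theta)$, which is the displayed inequality preceding the lemma.

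Next I would show $g$ is nondecreasing on $[-\beta_M,\beta_M]$. Differentiating in $\xi$,
\[
g'(\xi) = \kappa_{M} + \partial_{\psi_M} f_{0,M} = \kappa_{M} - \Big(\lambda_M^{\mathrm{dec}} + \lambda_M^{\mathrm{pro}}\phi_V\theta\frac{\sigma_H}{\sigma_H+\phi_\sigma} + \lambda_\theta^{\mathrm{dec}}\theta\Big).
\]
Since $\phi_V\in[0,1]$, $\frac{\sigma_H}{\sigma_H+\phi_\sigma}\in[0,1]$ (using $\phi_\sigma\ge 0$), and $0\le\theta\le|\theta|_{\max}$, the parenthesized quantity is nonnegative and bounded above by $\lambda_M^{\mathrm{dec}} + \lambda_M^{\mathrm{pro}}|\theta|_{\max} + \lambda_\theta^{\mathrm{dec}}|\theta|_{\max}$, which is precisely the lower bound imposed on $\kappa_{M}$ in \eqref{kappaM}. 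Hence $g'(\xi)\ge 0$ throughout the interval.

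Finally, monotonicity gives $g(-\beta_M)\le g(\xi)\le g(\beta_M)$ for all $\xi\in[-\beta_M,\beta_M]$. Using fact (b), $g(\beta_M) = \kappa_M\beta_M + f_{0,M}(\phi_V,\beta_M,\phi_\sigma,\theta) \le \kappa_M\beta_M$, and $g(-\beta_M) = -\kappa_M\beta_M + f_{0,M}(\phi_V,-\beta_M,\phi_\sigma,\theta) \ge -\kappa_M\beta_M$. Combining the two, $|\mathcal{N}_{M}[\xi]| = |g(\xi)| \le \kappa_M\beta_M$, which is the claim; specializing to $\beta_M=1$ recovers the form used later in the MBP proof for $\phi_M$.

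The only genuine subtlety --- the point I would treat most carefully --- is legitimizing the auxiliary bound $0\le\theta\le|\theta|_{\max}$, since $\kappa_M$ was calibrated against $|\theta|_{\max}$: one must invoke the nonnegativity argument for $\theta$ together with its monotone decay, so that both the sign of $f_{0,M}$ at $\pm\beta_M$ and the uniform bound on $\partial_{\psi_M}f_{0,M}$ are controlled. Everything else is the same elementary one‑variable monotonicity estimate already carried out for $\mathcal{N}_{\sigma}$.
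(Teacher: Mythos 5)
Your proof is correct and follows exactly the route the paper intends: the paper states Lemma~\ref{lemmanonlinearM} without writing out the argument, but it sets up precisely the two ingredients you use --- the derivative bound \eqref{kappaM} defining $\kappa_M$ and the endpoint sign condition $f_{0,M}(\phi_V,\beta_M,\phi_\sigma,\theta)\le 0\le f_{0,M}(\phi_V,-\beta_M,\phi_\sigma,\theta)$ --- which combine via the standard monotonicity argument of the Du et al.\ MBP framework, just as you carry out. Your extra care with $0\le\theta\le|\theta|_{\max}$ matches the contradiction argument the paper gives immediately before the lemma, so nothing is missing.
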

In this study, we set $\beta_M=1$.

\subsubsection{Fully discrete scheme}
Applying the central finite difference scheme in space to \eqref{kappaeqM} results in the following system of ordinary differential equations (ODEs):
{
\begin{equation*}
  \frac{d\Psi_M}{dt} +L_{\kappa_{M},h} \Psi_M=N_{M}(\Phi_V, \Psi_M, \Phi_\sigma, \Theta),
\end{equation*}
where $
\Psi_M = \left( {\psi_M}_{1,1},\, {\psi_M}_{1,2},\, \ldots,\,{\psi_M}_{N,N} \right)^T.
$
Here, \(L_{\kappa_{M},h}\) denotes the discrete matrix form associated with the spatially discretized operator \(\mathcal{L}_{\kappa_{M}}\), and \(N_{M}(\Phi_V, \Psi_M, \Phi_{\sigma}, \Theta)\) represents the spatially discretized nonlinear term.
}

Then the fully discrete low-order ETD scheme is obtained:
{\begin{equation}\label{METD1}
  \Psi_{M}^{n+1}=\mathrm{e}^{-L_{\kappa_{M},h}  \tau} \Psi_{M}^{n}+N_{M}(\Phi_V^n, \Psi_M^n, \Phi_\sigma^n, \Theta^n)\int_0^\tau \mathrm{e}^{-L_{\kappa_{M},  h} (\tau-s)} ds.
  \end{equation}}

And the fully discrete ETD Runge Kutta scheme is:
{\begin{equation}\label{METDRK2}
  \left\{
  \begin{aligned}
  \quad \widetilde{\Psi}_{M}^{n+1} = & \mathrm{e}^{-L_{\kappa_{M},h}  \tau} \Psi_M^n+N_{M}(\Phi_V^n, \Psi_M^n, \Phi_\sigma^n, \Theta^n)\int_0^\tau \mathrm{e}^{-L_{\kappa_{M},h} (\tau-s)} ds, \\
  \quad \Psi_{M}^{n+1} = & \mathrm{e}^{-L_{\kappa_{M},h} \tau} \Psi_M^n+\int_0^\tau \mathrm{e}^{-L_{\kappa_{M},h}(\tau-s)} \Big( (1-\frac{s}{\tau} ) N_{M}(\Phi_V^n, \Psi_M^n, \Phi_\sigma^n, \Theta^n) \\
    & \qquad \qquad \qquad +\frac{s}{\tau}N_{M}(\widetilde{\Phi}_V^{n+1}, \widetilde{\Psi}_M^{n+1}, \widetilde{\Phi}_\sigma^{n+1}, \Theta^{n}) \Big) ds,
  \end{aligned}
  \right.
  \end{equation}}

Therefore, the numerical solution to the equation \eqref{phiM} could be derived from the following relationships:
{\begin{equation}\label{relationM}
  \widetilde{\Phi}_{M}^{n+1}=\frac{\widetilde{\Psi}_{M}^{n+1}+1}{2},  \quad \Phi_M^n=\frac{\Psi_M^n+1}{2},
  \end{equation}}

\subsubsection{Discrete MBP for $\psi_M$}

\begin{theorem}\label{theoremMBPM}
(Discrete MBP for $\psi_M$ of the ETD schemes): If \(\|\psi_M(\cdot, t_0)\|_{\infty} \leq 1\) and $\kappa_{M}$ satisfies \eqref{kappaM}, then both the ETD scheme \eqref{METD1} and the ETDRK scheme \eqref{METDRK2} unconditionally uphold the discrete MBP. Specifically, for any time step $\tau>0$, the solution to the ETD scheme \eqref{METD1} and the ETDRK scheme \eqref{METDRK2} satisfy $\|\Psi_M^n\|_\infty \leq 1$ for all $n \geq 0$.
\end{theorem}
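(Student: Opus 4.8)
The plan is to run the same induction-on-$n$ argument that was used for Theorem~\ref{theoremMBPsigma}, exploiting the fact that \eqref{kappaeqM} has exactly the same structure as \eqref{kappaeqsigma} (a linear operator $\mathcal{L}_{\kappa_M}=-M_M\Delta+\kappa_M\mathcal{I}$ of the type covered by Lemma~\ref{lemmaexp}, plus a nonlinearity $\mathcal{N}_M$ controlled by Lemma~\ref{lemmanonlinearM}). The base case is the hypothesis $\|\psi_M(\cdot,t_0)\|_{\infty}\leq 1$, i.e. $\|\Psi_M^0\|_\infty\leq 1$, and the induction step consists of showing $\|\Psi_M^{n+1}\|_\infty\leq 1$ whenever $\|\Psi_M^n\|_\infty\leq 1$. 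The first thing I would check is that the data fed into $N_M$ meet the hypotheses of Lemma~\ref{lemmanonlinearM}: the viable fraction $\Phi_V^n=\mathscr{C}(\Phi_T^n)-\Phi_N^n$ lies in $[0,1]$ by the cut-off construction together with Theorem~\ref{theoremBPN}; the nutrient $\Phi_\sigma^n\in[0,1]$ follows from the discrete MBP for $\psi_\sigma$ (Theorem~\ref{theoremMBPsigma}) and the transformation \eqref{relationsigma}; and $\Theta^n$ is nonnegative with $\|\Theta^n\|_\infty\leq|\theta^0|_{\max}$, which is exactly what is needed both for the sign conditions preceding Lemma~\ref{lemmanonlinearM} and for the choice \eqref{kappaM} of $\kappa_M$ to be admissible. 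With these in hand, Lemma~\ref{lemmanonlinearM} (with $\beta_M=1$) gives $\|N_M(\Phi_V^n,\Psi_M^n,\Phi_\sigma^n,\Theta^n)\|_\infty\leq\kappa_M$.

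For the low-order ETD scheme \eqref{METD1}, I would then bound, using the discrete analogue of Lemma~\ref{lemmaexp} for $-L_{\kappa_M,h}$ (exactly as $-L_{\kappa_\sigma,h}$ was handled in the proof of Theorem~\ref{theoremMBPsigma}, the constant $M_M$ being absorbed into the contraction estimate),
\begin{equation*}
\|\Psi_M^{n+1}\|_\infty \leq \mathrm{e}^{-\kappa_M\tau}\|\Psi_M^n\|_\infty + \int_0^\tau \mathrm{e}^{-\kappa_M(\tau-s)}\,\kappa_M\,ds \leq \mathrm{e}^{-\kappa_M\tau} + \frac{1-\mathrm{e}^{-\kappa_M\tau}}{\kappa_M}\kappa_M = 1,
\end{equation*}
which closes the induction for \eqref{METD1}. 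For the ETDRK scheme \eqref{METDRK2}, the first stage $\widetilde\Psi_M^{n+1}$ is precisely the ETD1 update, so $\|\widetilde\Psi_M^{n+1}\|_\infty\leq 1$ by what was just proved; moreover $\widetilde\Phi_V^{n+1}=\mathscr{C}(\widetilde\Phi_T^{n+1})-\Phi_N^n\in[0,1]$ and $\widetilde\Phi_\sigma^{n+1}\in[0,1]$ via \eqref{relationsigma} and $\|\widetilde\Psi_\sigma^{n+1}\|_\infty\leq 1$, so Lemma~\ref{lemmanonlinearM} applies to $N_M(\widetilde\Phi_V^{n+1},\widetilde\Psi_M^{n+1},\widetilde\Phi_\sigma^{n+1},\Theta^n)$ as well, again with bound $\kappa_M$. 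Substituting both bounds into the second stage and using that the convex combination $(1-\frac{s}{\tau})\kappa_M+\frac{s}{\tau}\kappa_M=\kappa_M$ reduces the right-hand side to the identical integral, so $\|\Psi_M^{n+1}\|_\infty\leq 1$. Finally, \eqref{relationM} transfers this to $\Phi_M^n\in[0,1]$ for all $n$.

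The main obstacle here is bookkeeping across the coupled system rather than any genuinely new estimate: one must be careful that, at the point where the scheme for $\phi_M$ is advanced, the quantities $\Phi_V^n$, $\Phi_\sigma^n$, $\Theta^n$ (and their tilde counterparts) have \emph{already} been shown to lie in the ranges demanded by Lemma~\ref{lemmanonlinearM}, so that the induction hypotheses of the several component equations are mutually consistent. This is exactly where the cut-off operator on $\Phi_T$, Theorem~\ref{theoremMBPsigma}, and the nonnegativity/upper bound of $\Theta^n$ are invoked; once they are, the remainder is the verbatim computation from the proof of Theorem~\ref{theoremMBPsigma}. It is worth emphasizing that, in contrast with Theorem~\ref{theoremBPN}, no step-size restriction is required: the estimate holds for every $\tau>0$ because it is driven solely by the semigroup contraction and the $\kappa_M$-bound on $N_M$.
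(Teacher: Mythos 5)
Your proposal is correct and follows essentially the same route as the paper: induction on $n$, the contraction-semigroup estimate of Lemma~\ref{lemmaexp} applied to $-L_{\kappa_M,h}$, and the bound $\|\mathcal{N}_M\|_\infty\leq\kappa_M$ from Lemma~\ref{lemmanonlinearM}, first for the ETD1 stage and then reused for the ETDRK2 convex combination. Your extra bookkeeping verifying that $\Phi_V^n$, $\Phi_\sigma^n$, $\Theta^n$ and their tilde counterparts lie in the required ranges is a slightly more explicit version of what the paper assumes implicitly, not a different argument.
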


\begin{proof}
Analogous to the proof of Theorem \ref{theoremMBPsigma}, we merely need to demonstrate that if $\|\Psi_M^{n}\|_\infty \leq 1$ , it can be inferred that  $\|\Psi_M^{n+1}\|_\infty \leq 1$ for any $n$.

For the ETD scheme \eqref{METD1}, an application of Lemmas \ref{lemmaexp} and \ref{lemmanonlinearM}, combined with the fact that ${\phi_V}_{i,j}^n \in [0,1]$  and $\|\Psi_{M}^n \|_\infty \le 1$, indicates that
{
\begin{equation}\label{thetd1M}
\begin{aligned}
\|\Psi_M^{n+1}\|_\infty & \leq \|\mathrm{e}^{-L_{\kappa_{M},h} \tau}\|_\infty \|\Psi_M^n\|_\infty + \int_0^\tau \|\mathrm{e}^{-L_{\kappa_{M},h} (\tau-s)}\|_\infty \|N_M(\Phi_V^n, \Psi_M^n, \Phi_\sigma^n, \Theta^n)\|_\infty ds, \\
& \leq \mathrm{e}^{-\kappa_M \tau} + \int_0^\tau \mathrm{e}^{-\kappa_M(\tau-s)} \kappa_M ds = \mathrm{e}^{-\kappa_{M} \tau}+ \frac{1-\mathrm{e}^{-\kappa_{M} \tau}}{\kappa_{M}}\kappa_{M}=1.
\end{aligned}
\end{equation}}
 For the ETDRK scheme \eqref{METDRK2}, since $\|\Psi_{M}^n \|_\infty \leq 1$, and $\|{\widetilde\Psi_{M}}^{n+1} \|_\infty \leq 1$ (based on \eqref{thetd1M}), we have
 {\[
  \begin{aligned}
    \|\Psi_{M}^{n+1} \|_\infty \le & \|\mathrm{e}^{-L_{\kappa_{M},h}  \tau} \|_\infty \cdot \|\Psi_{M}^n \|_\infty
      +\int_0^\tau \, \|\mathrm{e}^{-L_{\kappa_{M},h} (\tau-s)} \|_\infty
    \\  \qquad \qquad  &
    \cdot \Big\| (1-\frac{s}{\tau} ) N_{M}(\Phi_V^n, \Psi_{M}^n,\Phi_{\sigma}^n,\Theta^n)+\frac{s}{\tau}N_{M}(\widetilde\Phi_V^{n+1}, \widetilde\Psi_{M}^{n+1},\widetilde\Phi_{\sigma}^{n+1},\Theta^{n}) \Big\|_\infty ds, \\
   \le &  \mathrm{e}^{-\kappa_{M} \tau}+\int_0^\tau \mathrm{e}^{- \kappa_{M}(\tau-s)} \Big( (1-\frac{s}{\tau} ) \kappa_{M}+\frac{s}{\tau}\kappa_{M} \Big) ds,\\
   = & \mathrm{e}^{-\kappa_{M} \tau}+ \frac{1-\mathrm{e}^{-\kappa_{M} \tau}}{\kappa_{M}}\kappa_{M}=1.
   \end{aligned}
\]}

This completes the proof.
\end{proof}

Due to the relationships defined in \eqref{relationM}, it is obvious that if \(\phi_{M}(\cdot,t_0) \in [0,1]\) (or \(\|\psi_{M}(\cdot,t_0)\|_{\infty} \leq 1\)), then it follows that \(\phi_{M}(\cdot,t) \in [0,1]\) for all \(t\).

\subsection{Fully discrete numerical scheme for $\theta$}\label{theta_full_discret}

Applying the central finite difference scheme in space to \eqref{theta}, resulting in the following system of ordinary differential equations (ODEs):
{\begin{equation*}
  \frac{d\Theta}{dt} = -\lambda_{\theta}^{\mathrm{deg}} \Theta \odot \Phi_M,
  \end{equation*}}

To discretize this equation, we adopt the trapezoidal rule, yielding the following numerical scheme:

{\begin{equation}\label{TR_theta}
  \Theta^{n+1} = \Theta^n + \frac{\tau}{2} \left( -\lambda_\theta^{\mathrm{deg}} \Phi_M^n \odot \Theta^n - \lambda_\theta^{\mathrm{deg}} \Phi_M^{n+1} \odot \Theta^{n+1} \right).
  \end{equation}}

\begin{theorem}\label{theoremBPtheta}
(Discrete bound preservation for $\theta$ of the Trapezoidal rule scheme): If $\theta_{i,j}^0 \in \left[ 0, \, \theta^0_{\max} \right], \forall i,j$, then the trapezoidal rule scheme \eqref{TR_theta} preserves the discrete bound. That is, for any time step size $\tau\leq\frac{2}{\lambda_{\theta}^{deg}}$ (which is easy to satisfy), then the trapezoidal rule numerical solution satisfies: $\theta_{i,j}^n \in \left[ 0, \, \theta^0_{\max} \right], \forall i,j,$, for any $n \geq 0$.
\end{theorem}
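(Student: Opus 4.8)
The plan is to argue by induction on $n$, mirroring exactly the structure of the proof of Theorem~\ref{theoremBPN}. The base case $n=0$ is precisely the hypothesis $\theta_{i,j}^0 \in [0,\theta^0_{\max}]$. For the inductive step, I would assume $\theta_{i,j}^n \in [0,\theta^0_{\max}]$ for every $(i,j)$ and deduce the same at level $n+1$. The key preliminary observation is that the two factors $\phi_{M,i,j}^n$ and $\phi_{M,i,j}^{n+1}$ appearing in \eqref{TR_theta} both lie in $[0,1]$: this is guaranteed by Theorem~\ref{theoremMBPM} (discrete MBP for $\psi_M$, equivalently $\phi_M$, via \eqref{relationM}), provided the MDE initial data and $\kappa_M$ satisfy its hypotheses. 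Using $\phi_M^{n+1}$ here is legitimate because in the algorithm $\Phi_M^{n+1}$ is computed \emph{before} $\Theta^{n+1}$ --- the ETDRK update \eqref{METDRK2} feeds only on $\Theta^{n}$ --- so \eqref{TR_theta} is in fact an explicit, diagonal (component-wise) update for $\Theta^{n+1}$, not a genuinely implicit one.

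Next I would rewrite \eqref{TR_theta} component-wise as
\begin{equation*}
\Bigl(1+\tfrac{\tau}{2}\lambda_\theta^{\mathrm{deg}}\phi_{M,i,j}^{n+1}\Bigr)\theta_{i,j}^{n+1}
=\Bigl(1-\tfrac{\tau}{2}\lambda_\theta^{\mathrm{deg}}\phi_{M,i,j}^{n}\Bigr)\theta_{i,j}^{n},
\end{equation*}
and solve for $\theta_{i,j}^{n+1}$ as the ratio of the right-hand side to the strictly positive coefficient on the left. Invoking $\tau\le 2/\lambda_\theta^{\mathrm{deg}}$ together with $\phi_{M,i,j}^n\in[0,1]$ yields $\tfrac{\tau}{2}\lambda_\theta^{\mathrm{deg}}\phi_{M,i,j}^n\in[0,1]$, hence the numerator factor $1-\tfrac{\tau}{2}\lambda_\theta^{\mathrm{deg}}\phi_{M,i,j}^n\in[0,1]$; likewise $1+\tfrac{\tau}{2}\lambda_\theta^{\mathrm{deg}}\phi_{M,i,j}^{n+1}\ge 1$. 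Combined with the inductive hypothesis $\theta_{i,j}^n\ge 0$, the right-hand side is nonnegative and the left coefficient positive, so $\theta_{i,j}^{n+1}\ge 0$; and since the numerator factor is $\le 1$ while the denominator is $\ge 1$, the ratio is $\le 1$, whence $\theta_{i,j}^{n+1}\le\theta_{i,j}^n\le\theta^0_{\max}$. This closes the induction and, as a byproduct, exhibits $\theta$ as pointwise monotonically non-increasing, consistent with the continuous degradation law \eqref{theta}.

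There is essentially no analytic obstacle: the argument is a one-line sign count once the scheme is written component-wise, and the threshold $\tau\le 2/\lambda_\theta^{\mathrm{deg}}$ is exactly what keeps the explicit-stage factor $1-\tfrac{\tau}{2}\lambda_\theta^{\mathrm{deg}}\phi_{M,i,j}^n$ nonnegative, which is the crux of the lower bound. The only point I would state carefully is the justification that $\phi_{M,i,j}^{n+1}\in[0,1]$ is already available when \eqref{TR_theta} is solved --- i.e. that the $\phi_M$ and $\theta$ updates are sequenced, not mutually implicit --- so that Theorem~\ref{theoremMBPM} may indeed be applied at level $n+1$.
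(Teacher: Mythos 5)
Your argument is correct and is essentially identical to the paper's proof: both rewrite \eqref{TR_theta} component-wise as $\theta_{i,j}^{n+1}=\bigl(1-\tfrac{\tau}{2}\lambda_\theta^{\mathrm{deg}}\phi_{M_{i,j}}^{n}\bigr)\big/\bigl(1+\tfrac{\tau}{2}\lambda_\theta^{\mathrm{deg}}\phi_{M_{i,j}}^{n+1}\bigr)\cdot\theta_{i,j}^{n}$, invoke Theorem~\ref{theoremMBPM} for $\phi_M\in[0,1]$ and the restriction $\tau\le 2/\lambda_\theta^{\mathrm{deg}}$ to place the multiplicative factor in $[0,1]$, and conclude by induction that $0\le\theta_{i,j}^{n+1}\le\theta_{i,j}^{n}\le\theta^0_{\max}$. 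Your additional remark that the update is explicit because $\Phi_M^{n+1}$ is computed before $\Theta^{n+1}$ is a careful observation the paper leaves implicit, but it does not alter the argument.
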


\begin{proof}

If we can prove that if \(   \theta_{i,j}^n \in \left[0, \theta^0_{\max}\right]\), then \(   \theta_{i,j}^{n+1} \in \left[0, \theta^0_{\max}\right]\), the theorem will be proven. First, simplifying equation \eqref{TR_theta} yields the update formula for \(\theta^{n+1}_{i,j}\):
\begin{equation*}
\theta^{n+1}_{i,j} = \Big(\frac{1 - \frac{\tau}{2} \lambda_\theta^{deg} \phi_{M_{i,j}}^{n}}{1 + \frac{\tau}{2} \lambda_\theta^{deg} \phi_{M_{i,j}}^{n+1}} \Big) \theta^n_{i,j} , \quad \mbox{with} \, \, \,
\theta^n_{i,j} = \theta(x_i, y_j,t_n) .
\end{equation*}
where \(\phi_{M_{i,j}}^n, \phi_{M_{i,j}}^{n+1} \in [0,1]\) by Theorem \ref{theoremMBPM}, and \(\lambda_\theta^{\deg} > 0\), we see that if $ \tau \le \frac{2}{\lambda_\theta^{deg}}$, then
\begin{equation*}
0 \leq \Big(\frac{1 - \frac{\tau}{2} \lambda_\theta^{deg} \phi_{M_{i,j}}^{n}}{1 + \frac{\tau}{2} \lambda_\theta^{deg} \phi_{M_{i,j}}^{n+1}} \Big) \leq 1.
\end{equation*}

Combining this with the inductive hypothesis \(   \theta_{i,j}^n \in \left[0, \theta^0_{\max}\right]\), we obtain:
\[
  0\leq \theta_{i,j}^{n+1} \leq \theta_{i,j}^n \leq \ldots \leq \theta^0_{\max}.
\]

Here the parameter $\lambda_\theta^{deg}$ is set to 1 in the practical computation, thus it is straightforward to make $\tau\leq 2$ satisfied. The proof is complete.
\end{proof}

\subsection{Fast implementation of the ETD schemes}
In the following, we provide an efficient implementation of the ETD schemes utilizing fast algorithms based on the Fast Fourier Transform (FFT).

{
Define the $\{\Upsilon_i\}_{i=0,1,2}$ functions:
\begin{equation*}
\Upsilon_0(L):=\mathrm{e}^{-L},  \quad\Upsilon_1(L):=\frac{1-\mathrm{e}^{-L}}{L}, \quad\Upsilon_2(L):=\frac{\mathrm{e}^{-L}-1+L}{L^2}, \quad L \neq 0.
\end{equation*}}
Subsequently, the numerical scheme could be rewritten as the following equivalent system:

(Fast algorithm of the ETDRK scheme)
\begin{equation}\label{fastETDRK}
\left\{
\begin{aligned}
\quad \widetilde\Phi_T^{n+1}& = \Upsilon_0\left(L_{\kappa_{T},h} \tau\right) \Phi_T^{n}+\tau \Upsilon_1\left(L_{\kappa_{T},h} \tau\right)N_{T}(\Phi_T^{n},\Phi_V^n,\Phi_\sigma^n,\Theta^n),\\
\quad \widetilde{\Psi}^{n+1}_\sigma& = \Upsilon_0\left(L_{\kappa_{\sigma},h} \tau\right) \Psi^n_\sigma+\tau \Upsilon_1\left(L_{\kappa_{\sigma},h} \tau\right)N_{\sigma}(\Phi_V^n,\Psi_\sigma^n),\\
\quad \widetilde{\Psi}^{n+1}_M& = \Upsilon_0\left(L_{\kappa_{M},h} \tau\right) \Psi_M^n+\tau \Upsilon_1\left(L_{\kappa_{M},h} \tau\right)N_{M}(\Phi_V^n,\Psi_M^n,\Phi_\sigma^n,\Theta^n),\\
\widetilde\Phi_\sigma^{n+1}&=(\widetilde\Psi_\sigma^{n+1}+1)/2,\\
\widetilde\Phi_V^{n+1}&=\mathscr{C}\left(\widetilde\Phi_T^{n+1}\right)-\Phi_N^{n},\\
\hat\Phi_T^{n+1}&=\mathscr{C}\left(\widetilde\Phi_T^{n+1}\right),\\
\quad \bar\Phi_T^{n+1} & = \hat\Phi_T^{n+1}+\tau \Upsilon_2\left(L_{\kappa_{T},h}\tau\right)\left(N_{T}(\hat\Phi_T^{n+1},\widetilde{\Phi}^{n+1}_V,\widetilde{\Phi}_\sigma^{n+1},\Theta^{n})-N_{T}(\Phi_T^{n},\Phi_V^n,\Phi_\sigma^n,\Theta^n)\right),\\
\quad \Psi_\sigma^{n+1} & = \widetilde\Psi_\sigma^{n+1}+\tau \Upsilon_2\left(L_{\kappa_{\sigma},h}\tau\right)\left(N_{\sigma}(\widetilde\Phi_V^{n+1},\widetilde\Psi_\sigma^{n+1})-N_{\sigma}(\Phi_V^n,\Psi_\sigma^n)\right),\\
\quad \Psi^{n+1}_M & = \widetilde{\Psi}^{n+1}_M+\tau \Upsilon_2\left(L_{\kappa_{M},h}\tau\right)\left(N_{M}(\widetilde{\Phi}^{n+1}_V,\widetilde\Psi_M^{n+1},\widetilde\Phi_\sigma^{n+1},\Theta^{n})-N_{M}(\Phi_V^n,\Psi_M^n,\Phi_\sigma^n,\Theta^n)\right),\\
\Phi^{n+1}_\sigma&=(\Psi_\sigma^{n+1}+1)/2,\\
\Phi^{n+1}_M&=(\Psi^{n+1}_M+1)/2,\\
\Phi_T^{n+1}&=\mathscr{C}\left(\bar\Phi_T^{n+1}\right).\\
\end{aligned}
\right.
\end{equation}
{\begin{remark}\label{remarkfast}
  Efficient computation of \eqref{fastETDRK} poses a great challenge, due to the need for fast evaluation of the products between $\{\Upsilon_i(L_{\kappa_{T},h}\tau), \Upsilon_i(L_{\kappa_{\sigma},h}\tau), \Upsilon_i(L_{\kappa_{M},h}\tau)\}_{i=0,1,2}$ and vectors. For large-scale problems, directly computing these products can be computationally expensive. In regular domains, the specific structure matrix allows the use of FFT-based fast algorithms, significantly improving computational efficiency.
  \end{remark} }

  {We represent \( \Phi_T^n, \Phi_N^n, \Phi_V^n, \Phi_{\sigma}^n, \Phi_M^n, \Theta^n \in \mathbb{R}^{N \times N} \) in matrix form with entries \( {\phi_T^n}_{i,j}\), \({\phi_N^n}_{i,j}\) \( {\phi_V^n}_{i,j}\), \({\phi_{\sigma}^n}_{i,j}\), \({\phi_M^n}_{i,j}\), \(\theta^n_{i,j}\),  and define the operators
\begin{equation}\label{L3}
  \mathcal{L}_{\kappa_{T}, h} = -\kappa_{T_1} \Delta_h + \kappa_{T_2} \Delta_h^2, \quad \mathcal{L}_{\kappa_{\sigma}, h} = -\frac{M_{\sigma}}{\delta_\sigma} \Delta_h + \kappa_{\sigma} \mathcal{I}, \quad \mathcal{L}_{\kappa_{M}, h} = -M_M \Delta_h + \kappa_{M} \mathcal{I} ,
  \end{equation}
whose matrix forms are given by \( L_{\kappa_{T}, h} \), \( L_{\kappa_{\sigma}, h} \), and \( L_{\kappa_{M},h} \), respectively. $\Delta_h$ is the discrete Laplace operator with Neumann boundary conditions, using the central finite difference approximation as described in Sec.~\ref{space_disc}. }

Since \eqref{L3} and the operators $\mathcal{L}_{\kappa_T, h}$, $\mathcal{L}_{\kappa_{\sigma}, h}$, and $\mathcal{L}_{\kappa_M, h}$ arise from the discretization of the operators $\mathcal{L}_{\kappa_T}$, $\mathcal{L}_{\kappa_{\sigma}}$, and $\mathcal{L}_{\kappa_M}$ with Neumann boundary conditions, these discrete operators are diagonalizable and have the following eigenvalue decompositions, as shown in \cite{du2019maximum, van1992computational}:
\begin{equation*}
\mathcal{L}_{\kappa_T, h} := \mathcal{C}^{-1} \Lambda_T \mathcal{C}, \quad
\mathcal{L}_{\kappa_{\sigma}, h} := \mathcal{C}^{-1} \Lambda_{\sigma} \mathcal{C}, \quad
\mathcal{L}_{\kappa_M, h} := \mathcal{C}^{-1} \Lambda_M \mathcal{C},
\end{equation*}
where $\mathcal{C}$ represents the 2D Discrete Cosine Transform (DCT) operator, and $\mathcal{C}^{-1}$ is the inverse 2D Discrete Cosine Transform (iDCT). For any $V=\left(V_{k, l}\right) \in \mathbb{C}^{(N+1) \times (N+1)}$, the following holds for the eigenvalue operations:
\begin{equation*}
\left(\Lambda_T V\right)_{k,l} = \lambda_{T, k, l} V_{k,l}, \quad
\left(\Lambda_{\sigma} V\right)_{k,l} = \lambda_{\sigma, k, l} V_{k,l}, \quad
\left(\Lambda_M V\right)_{k,l} = \lambda_{M, k, l} V_{k,l}, \quad 1 \leq k,l \leq N+1,
\end{equation*}
where $\lambda_{T, k, l}$, $\lambda_{\sigma, k, l}$, and $\lambda_{M, k, l}$ denote the eigenvalues of the operators $\mathcal{L}_{\kappa_T, h}$, $\mathcal{L}_{\kappa_{\sigma}, h}$, and $\mathcal{L}_{\kappa_M, h}$, respectively. These eigenvalues are computed as follows:
\begin{equation*}
\begin{aligned}
\lambda_{T, k, l} &= -\kappa_{T_1} \left( d_k^x + d_l^y \right) + \kappa_{T_2} \left( d_k^x + d_l^y \right)^2, \quad k,l = 1, 2, \dots, N+1, \\
\lambda_{\sigma, k, l} &= \kappa_{\sigma} - \frac{M_{\sigma}}{\delta_{\sigma}} \left( d_k^x + d_l^y \right), \quad k,l = 1, 2, \dots, N+1, \\
\lambda_{M, k, l} &= \kappa_M - M_M \left( d_k^x + d_l^y \right), \quad k,l = 1, 2, \dots, N+1,
\end{aligned}
\end{equation*}
where
\begin{equation*}
d_k^x = -\frac{4}{h^2} \sin^2 \left( \frac{(k-1) \pi}{2N} \right), \quad
d_l^y = -\frac{4}{h^2} \sin^2 \left( \frac{(l-1) \pi}{2N} \right), \quad k,l = 1, 2, \dots, N+1.
\end{equation*}

According to Lemma 2.2 (4) in \cite{du2019maximum}, for $i=0,1,2$, the following holds:
\begin{equation*}
\begin{aligned}
\Upsilon_i \left( \mathcal{L}_{\kappa_T, h} \tau \right) &= \mathcal{C}^{-1} \Upsilon_i \left( \Lambda_T \tau \right) \mathcal{C}, \quad
\left( \Upsilon_i \left( \Lambda_T \tau \right) V \right)_{k,l} = \Upsilon_i \left( \lambda_{T,k,l} \tau \right) V_{k,l}, \quad k,l = 1, 2, \dots, N+1, \\
\Upsilon_i \left( \mathcal{L}_{\kappa_{\sigma}, h} \tau \right) &= \mathcal{C}^{-1} \Upsilon_i \left( \Lambda_{\sigma} \tau \right) \mathcal{C}, \quad
\left( \Upsilon_i \left( \Lambda_{\sigma} \tau \right) V \right)_{k,l} = \Upsilon_i \left( \lambda_{\sigma,k,l} \tau \right) V_{k,l}, \quad k,l = 1, 2, \dots, N+1, \\
\Upsilon_i \left( \mathcal{L}_{\kappa_M, h} \tau \right) &= \mathcal{C}^{-1} \Upsilon_i \left( \Lambda_M \tau \right) \mathcal{C}, \quad
\left( \Upsilon_i \left( \Lambda_M \tau \right) V \right)_{k,l} = \Upsilon_i \left( \lambda_{M,k,l} \tau \right) V_{k,l}, \quad k,l = 1, 2, \dots, N+1.
\end{aligned}
\end{equation*}

The DCT and its inverse can be implemented using the 2D FFT through the following steps. Here, we focus on the computation in the x-direction, with the y-direction computation being analogous. First, we start with a vector \(\phi = [\phi_1, \phi_2, \ldots, \phi_{N+1}]^T\) of length \(N+1\), and define its reflection vector \(\hat{\phi} = [\phi_{N}, \phi_{N-1}, \ldots, \phi_2]^T\) of length \(N-1\). Next, apply a \(2N\)-point FFT to the combined vector $v = \left[\phi, \hat{\phi}\right]^T$, and extract the first $N+ 1$ components, dividing them by 2. For more detailed steps, refer to Sec.~4.4-4.5 in \cite{van1992computational}.

{
Thus, the overall computational complexity of the proposed schemes is reduced from \( O(N^3) \) to \( O(N^2 \log_2 N) \) per time step through FFT-based fast calculations in two dimension.}

{
\section{Numerical Simulations of Tumor Growth Model with ECM Degradation}}
This section presents the simulation results obtained using the previously introduced tumor growth model and the proposed numerical algorithm. {Simulations are performed with different initial ECM conditions, and both the temporal and spatial convergences are verified to demonstrate the effectiveness of the numerical algorithm. In addition, we simulate the growth of both aggressive and baseline tumors, discuss the impact of nutrient supply, the effect of varying MDE expression levels on ECM degradation, and the influence of different haptotaxis parameters on tumor growth.} Both 2D and 3D tumor growth models are considered, with parameter values specified in Table \ref{param}, including the total tumor, necrotic core, and viable tumor. Furthermore, the maximum bound principle (MBP) and bound-preserving properties are tested.


{
\subsection{Two-dimensional tumor growth simulations with ECM initial values surrounding the tumor tissue}\label{sec:num_validation}
}
First we focus on a 2D square domain $\Omega=(-1, 1)^{2}$. The initial condition for the total tumor volume fraction $\phi_T$ is given by a Gaussian function centered at the origin:
\begin{equation*}
\phi_{T,0} (x,y) =\exp \Big(1-\frac{1}{1-16\left(x^2 + y^2\right)} \Big).
\end{equation*}

Given that the volume fraction of the necrotic core is 0 at the initial time, the total tumor volume fraction is equivalent to the volume fraction of the viable tumor at the initial time. At the beginning of the simulation, the nutrient concentration is uniformly set to 1 throughout the domain, and the MDE volume fraction is initialized at 0. {For the initial ECM value, we refer to Fig.~2 in reference \cite{winkler2020concepts}, which discovers that the extracellular matrix (ECM) surrounds the tumor tissue, providing cellular support and forming a physical barrier. Then we set a similar initial condition for the ECM surrounding the tumor tissue. The initial ECM condition is defined by the following function:}
\begin{equation}\label{initial_ECM}
\theta_0 = \frac{1}{2} + \frac{1}{2} \left( 1 - 2 \left|\phi_{T,0} - 0.5\right| \right),
\end{equation}
This function provides a smooth transition from 1/2 to 1 at the edge of the ECM ring.

{
    Notice that, for the nutrient source term, in reference \cite{fritz2019local} (eq.~(2.7)), the source term simplifies to \(-\lambda_T^{\mathrm{pro}} \phi_V\) after substituting the simulation parameters, which does not depend on the nutrient concentration $\phi_\sigma$. We have chosen a concentration-dependent source term of the form $-\lambda_\sigma \phi_V \phi_\sigma$ in \eqref{phisigma}. When the nutrient concentration $\phi_\sigma$ is high, the nutrient consumption by viable tumor cells increases; whereas, at lower concentrations, the consumption decreases. This concentration dependence allows the source term to reflect the changes in nutrient consumption by viable tumor cells at different nutrient levels, dynamically describing the process of nutrient consumption by viable tumor cells.
}

The temporal convergence test was conducted with a fixed spatial resolution of \( m_x = n_y = N = 256 \), \(\tau = 2 \times 10^{-3}\), and time step sizes \(\tau/2, \tau/4, \tau/8, \tau/16\) are considered. Fig.~\ref{conv_tests}(\subref{time_convergence_test}) displays the variation of \( \phi_T \) in terms of \( x \), at \( y = 0 \), \( t = 7 \). The convergence with different time step sizes is clearly observed. {The spatial convergence test is performed with a fixed time step \( \tau = 1 \times 10^{-4} \), and the spatial mesh is refined from \( N=64\) to \( N=256\). Fig.~\ref{conv_tests}(\subref{space_convergence_test}) shows the variation of \( \phi_T \) in terms of \( x \), at \( y = 0 \), \( t = 7 \), confirming the spatial convergence.
}

{
Based on the validated numerical schemes from the time and space convergence tests, we now discuss the results of tumor growth simulations with the boundary conditions given by \eqref{ICBC} and the ECM initial conditions surrounding the tumor tissue.
}
\begin{figure}[t!]
  \centering
  \begin{subfigure}[t]{0.48\textwidth}
    \centering
    \includegraphics[width=\textwidth]{./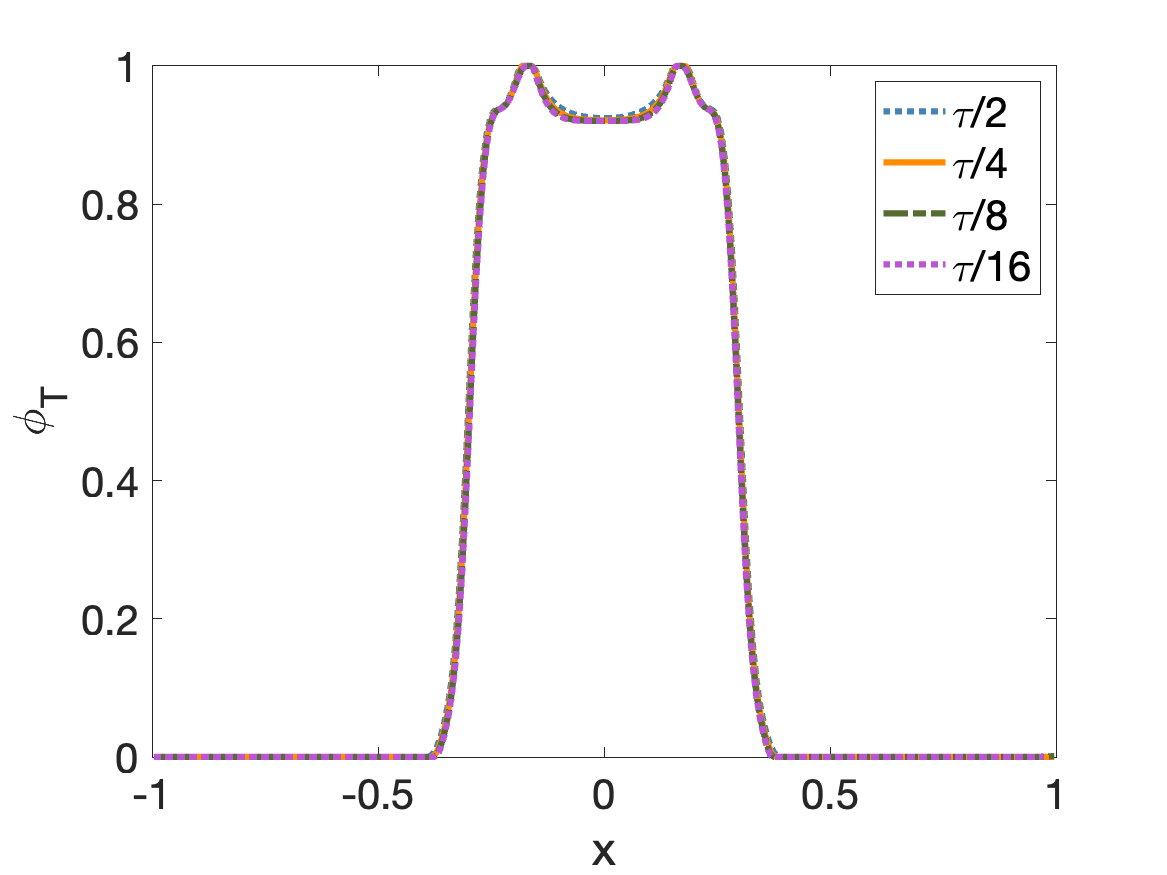}
    \caption{Time convergence test of the solution $\phi_T$: \( x \) at \( y = 0, t = 7 \) for different time step sizes.}
    \label{time_convergence_test}
  \end{subfigure}
  \hfill
  \begin{subfigure}[t]{0.48\textwidth}
    \centering
    \includegraphics[width=\textwidth]{./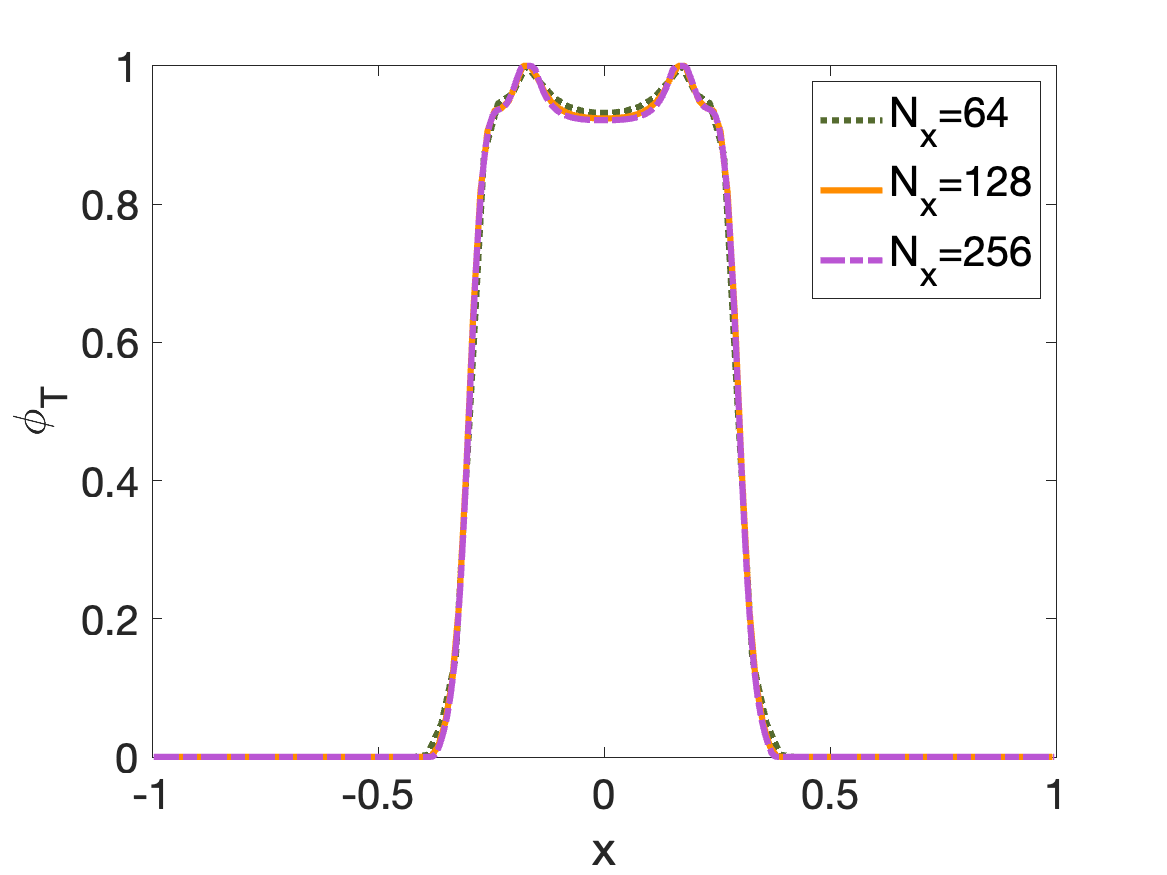}
    \caption{Space convergence test of the solution $\phi_T$:  \( x \) at \( y = 0, t = 7 \) for different spatial resolutions.}
    \label{space_convergence_test}
  \end{subfigure}
  \caption{Convergence test of the solution $\phi_T$}
  \label{conv_tests}
\end{figure}

{
  The simulation runs up to a final time of $T=10$. The time step size \(\tau\) is chosen based on the boundedness conditions outlined in Theorem~\ref{theoremBPN} and Theorem~\ref{theoremBPtheta}, which respectively require \(\tau \leq 2/\lambda_{VN}\) and \(\tau \leq 2/\lambda_{\theta}^{\text{deg}}\). These conditions are satisfied with the parameter values provided in Table \ref{param}. Time convergence tests (see Fig.~\ref{conv_tests}(\subref{time_convergence_test})) show that the numerical solution remains stable across different time step sizes. Therefore, we select \(\tau = 1 \times 10^{-3}\) as an appropriate time step size for the subsequent simulations. In terms of spatial discretization, a uniform grid is employed with \(m_x = n_y = N = 128\), ensuring adequate resolution to capture the dynamic evolution of the tumor interface. To guarantee the MBP for \(\psi_\sigma\) and \(\psi_M\), the conditions from Theorem~\ref{theoremMBPsigma} and Theorem~\ref{theoremMBPM} should be satisfied, which results in the parameter choices \(\kappa_{\sigma} \geq \lambda_{\sigma}\) and \(\kappa_M \geq \lambda_M^{\text{dec}} + \lambda_M^{\text{pro}}|\theta|_{\max} + \lambda_{\theta}^{\text{dec}}|\theta|_{\max}\).
}

\begin{figure}[t!]
    \centering
    \includegraphics[width=0.95\textwidth]{./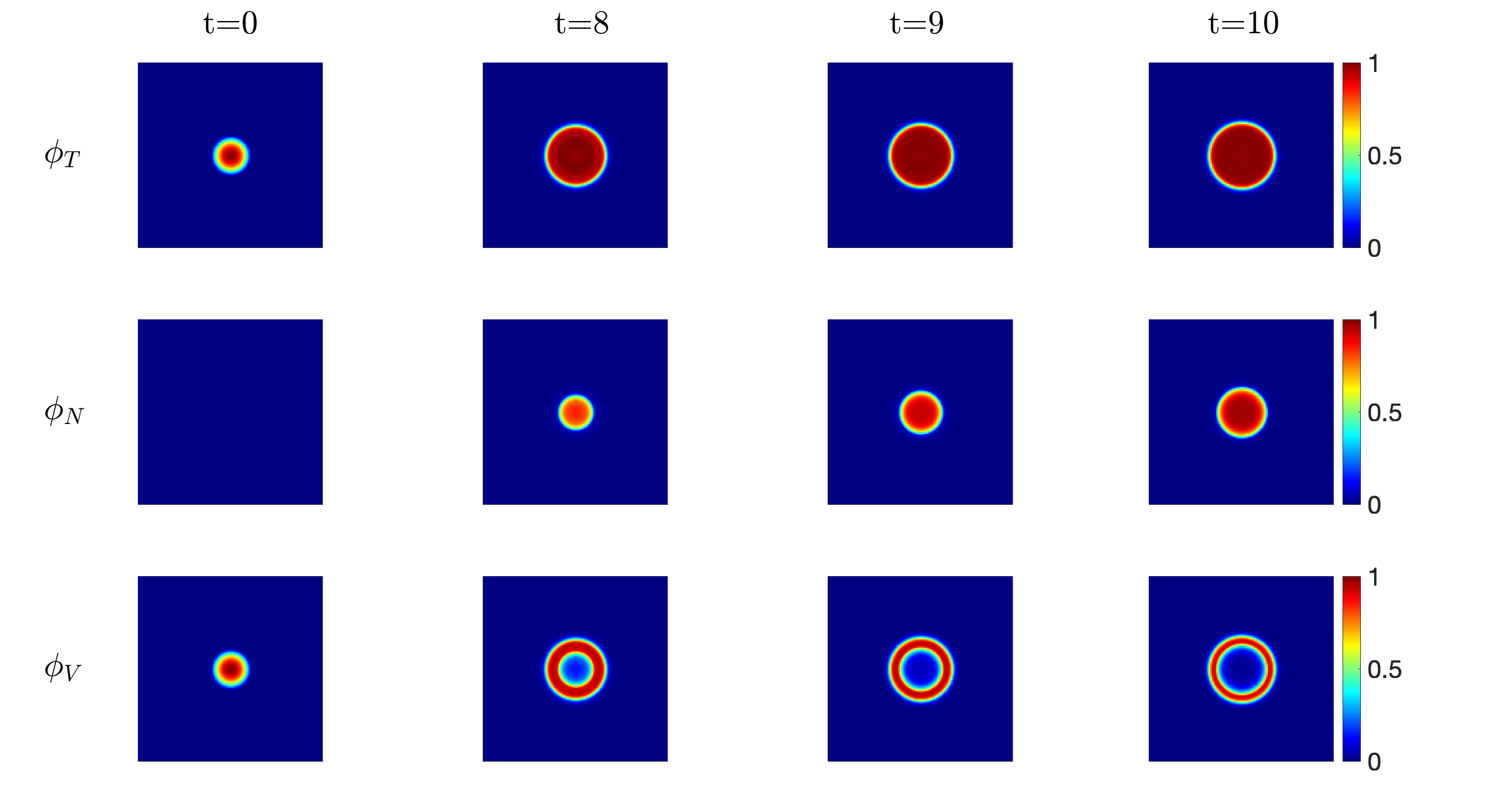}
    \caption{The volume fraction evolution of total tumor cells $\phi_T$, necrotic cells $\phi_N$, and viable cells $\phi_V$.}
    \label{tumor_2D}
\end{figure}
\begin{figure}[t!]
    \centering
    \includegraphics[width=0.85\textwidth]{./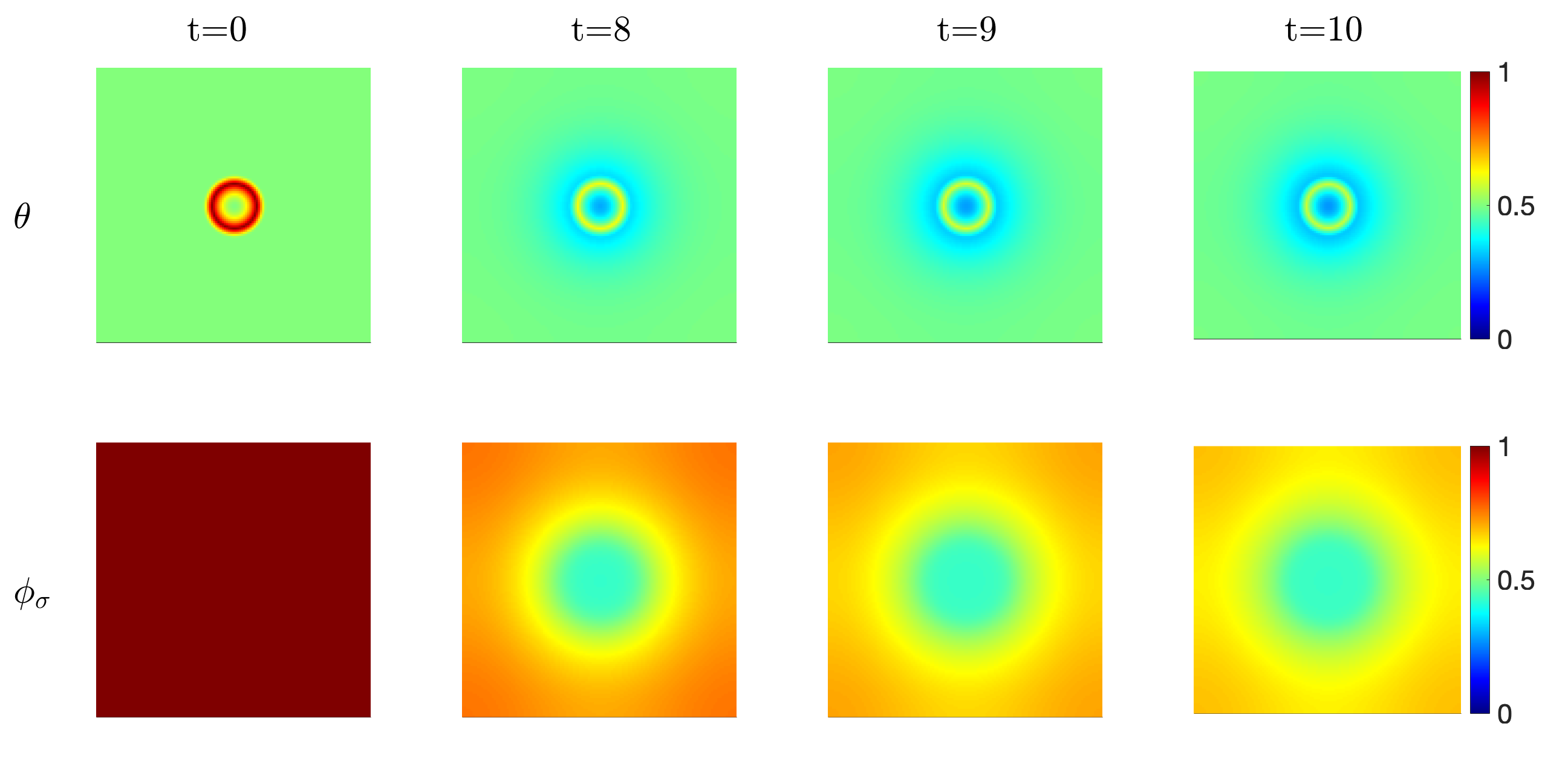}
    \caption{Evolution of ECM density and nutrient concentration in the tumor growth}
    \label{ECM_2D}
\end{figure}

Fig.~\ref{tumor_2D} illustrates the simulated volume fractions of total tumor cells $\phi_T$, necrotic cells $\phi_N$, and viable cells $\phi_V$ at time instants $t = 0, 8, 9, 10$. Initially, at $t=0$, there is no necrotic core. As time progresses, necrotic cells start to appear due to an insufficient supply of nutrients. The tumor forms a typical structure consisting of a ``necrotic core'' surrounded by an ``active shell''. Without the formation of new blood vessels (angiogenesis) to deliver extra oxygen and nutrients, the proportion of the necrotic core within the tumor will continue to increase over time.

In Fig.~\ref{ECM_2D}, the initial tumor is presented as a small circle, with the ECM distributed in a ring-like pattern around the tumor, forming a physical barrier.  As the tumor grows, the MDE released by viable tumor cells begin to degrade the ECM, which results in the degradation of the surrounding ECM of the tumor. While, the tumor continuously consumes nutrients, leading to a decline in the nutrient concentration. It is evident that the ECM and nutrient concentration are closely related to the tumor growth process, and their dynamic changes are crucial to the tumor microenvironment (TME).
\begin{figure}[t!]
  \centering

{\includegraphics[width=0.4\textwidth]{./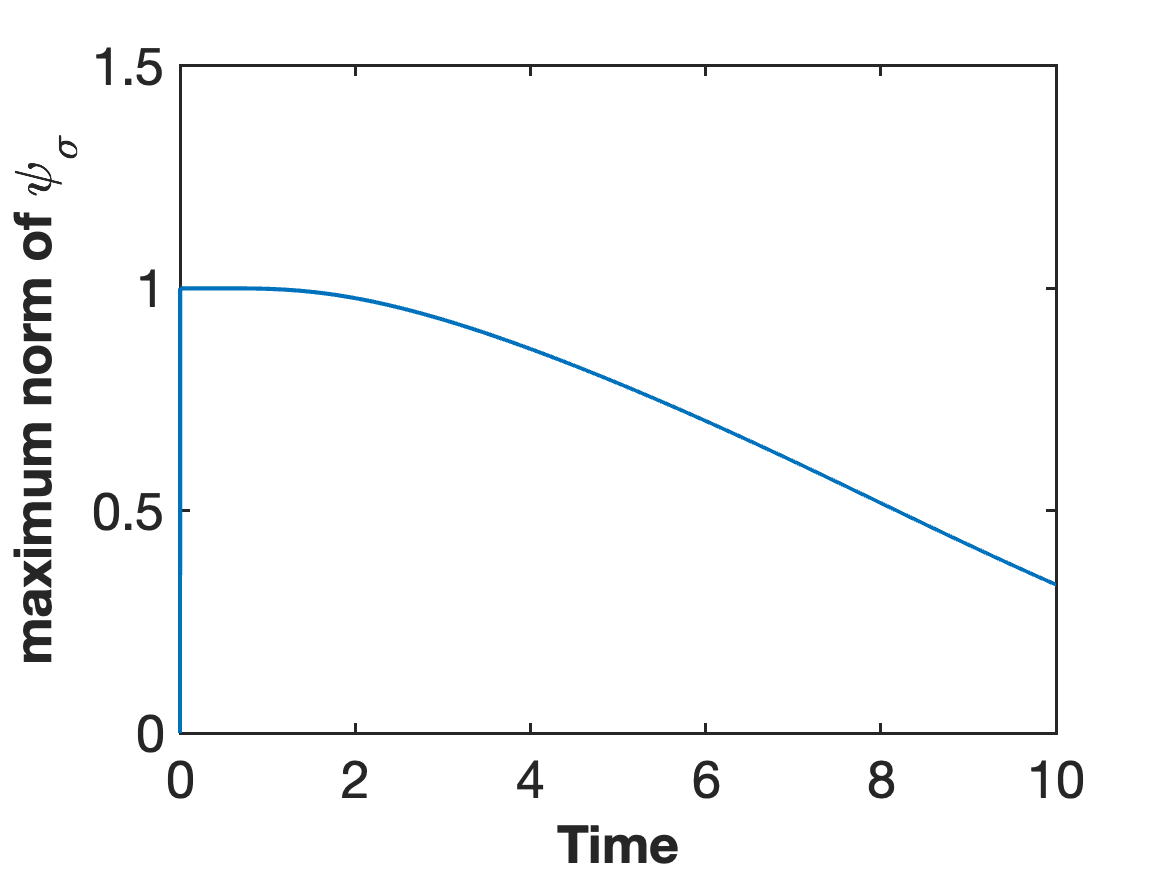}}
  {\includegraphics[width=0.4\textwidth]{./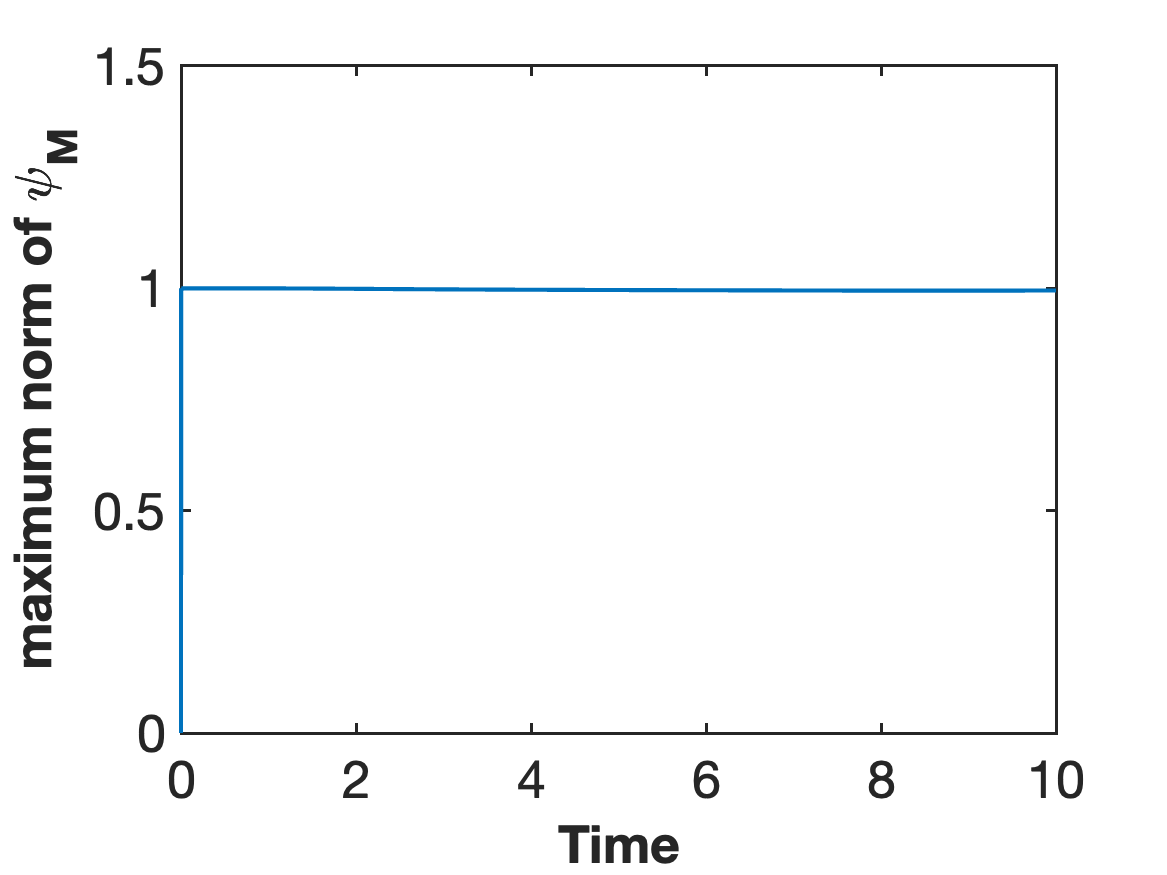}}
\caption{MBP test of $\psi_\sigma,$ and $ \psi_M$}
\label{MBPtest}
\end{figure}
\begin{figure}[t!]
  \centering
  {\includegraphics[width=0.32\textwidth ]{./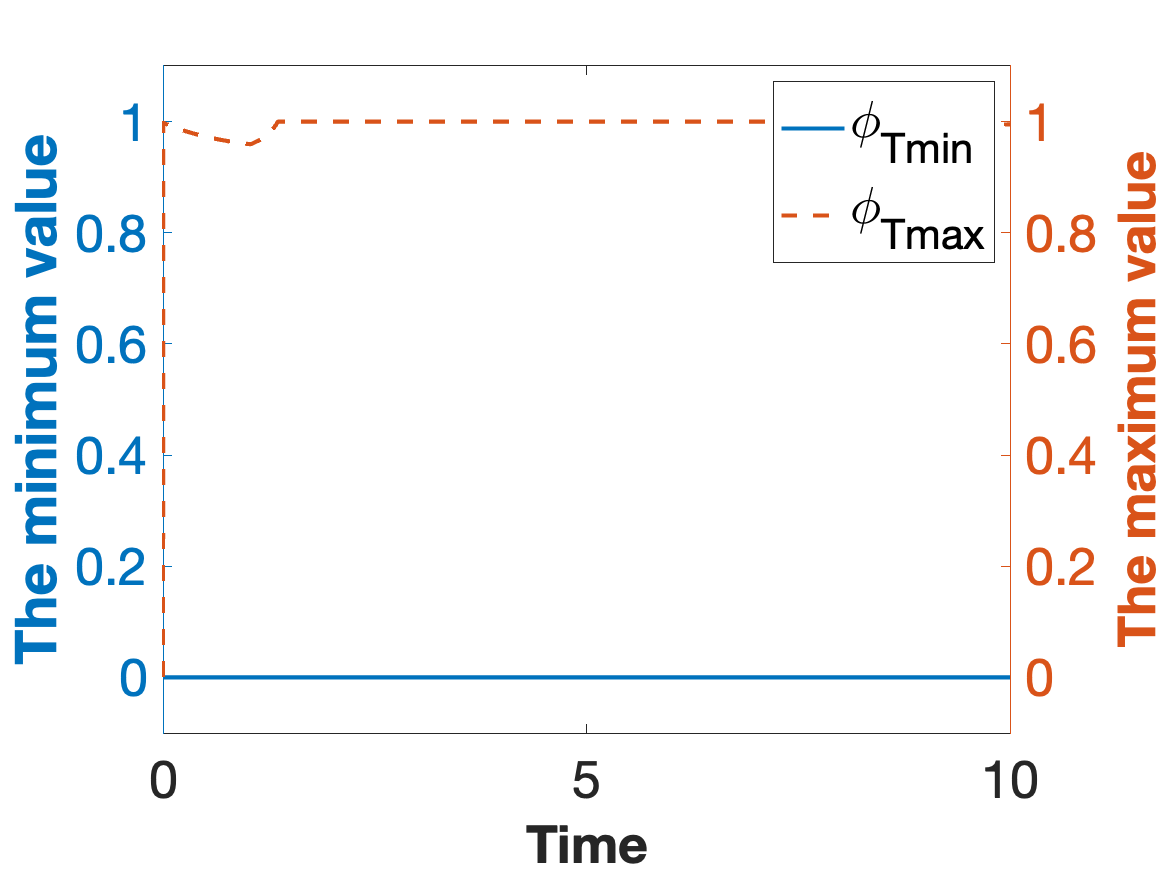}}
  {\includegraphics[width=0.32\textwidth ]{./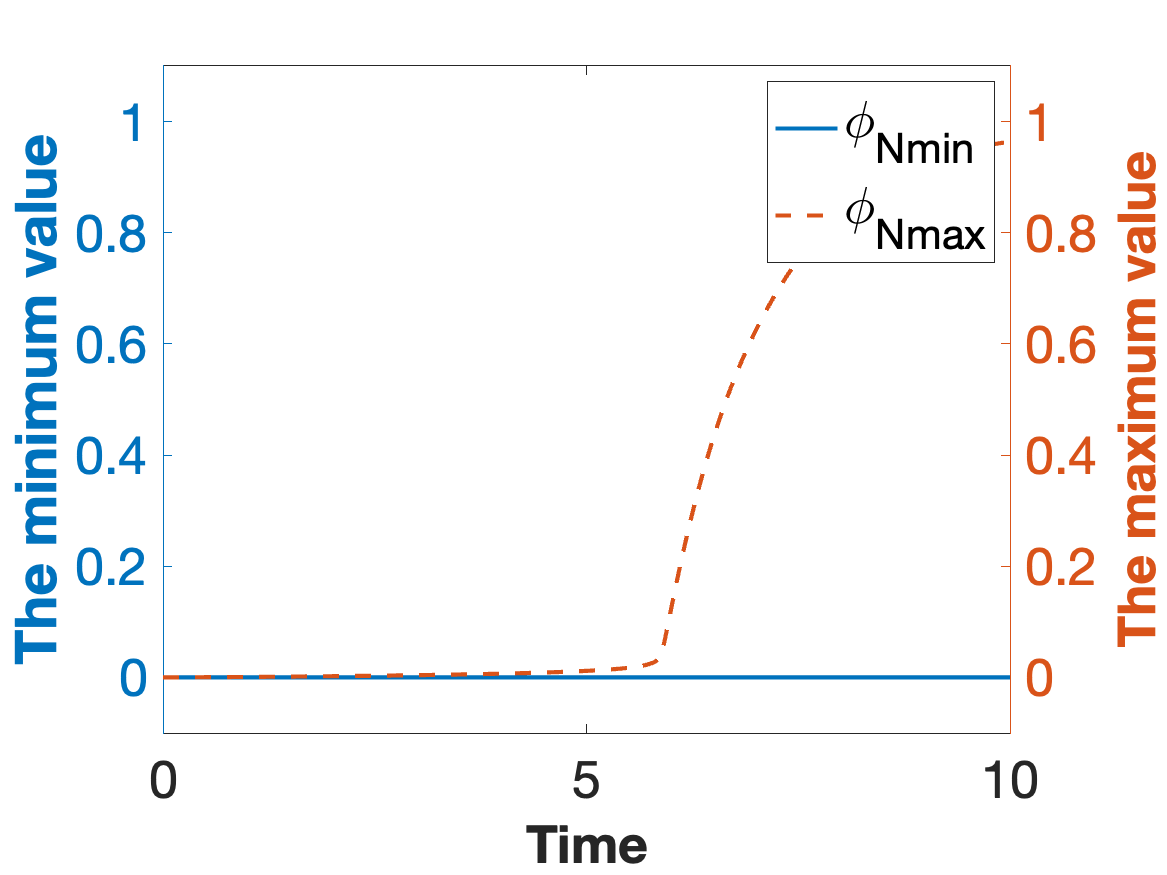}}
  {\includegraphics[width=0.32\textwidth ]{./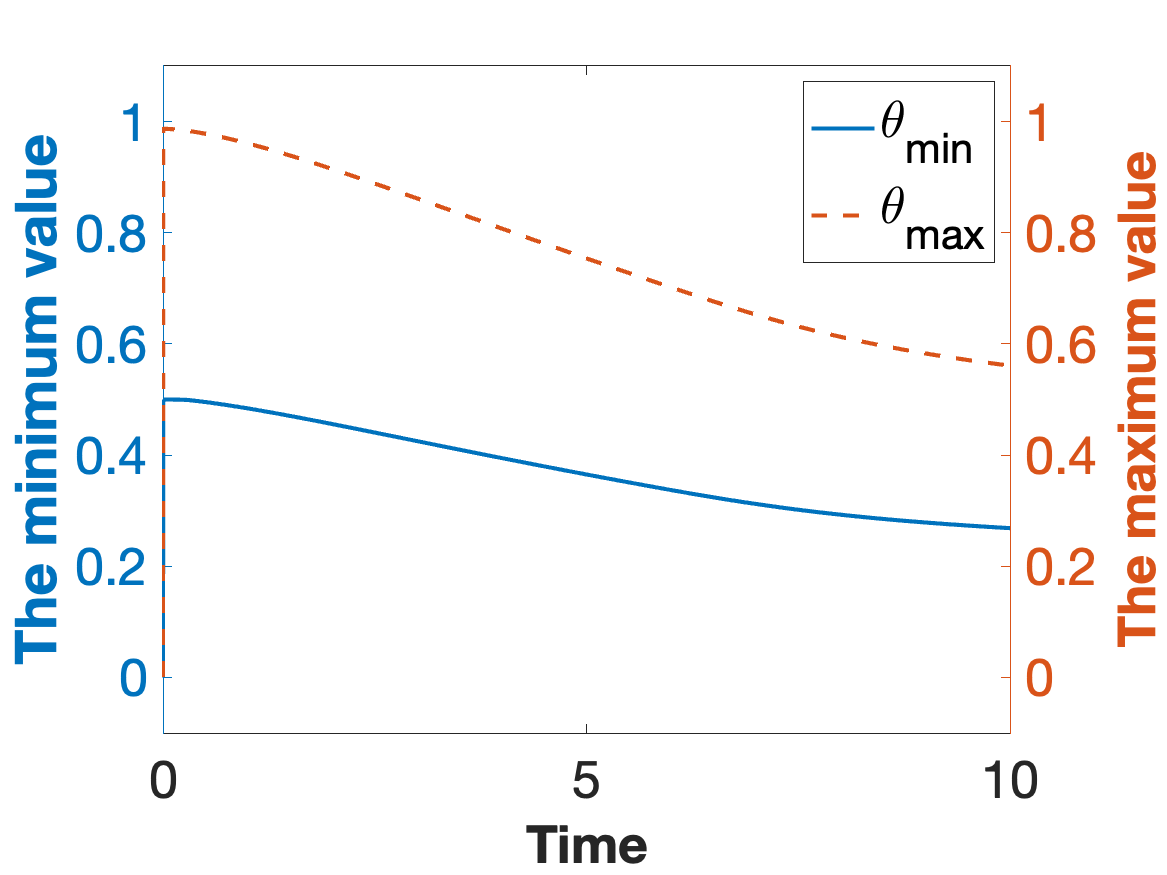}}
\caption{Bound preserving  test of $\phi_T, \phi_N,$ and $ \theta$}
\label{BPtest}
\end{figure}

We have also performed the MBP test for $\psi_\sigma$ and $\psi_M$, as well as the bound preserving test for $\phi_T$, $\phi_N$, and $\theta$, as illustrated in Figs.~\ref{MBPtest}-\ref{BPtest}. These tests have demonstrated  that the numerical algorithm can effectively maintain the inherent properties of the model, which are crucial for  accurately simulating tumor growth. This test further validates the effectiveness of our approach in handling complex biological processes.

{
  \subsection{Two-dimensional tumor growth simulations with quasi-homogeneous ECM initial values}
}

In the following simulations, we adopt the initial ECM setup from reference \cite{fritz2019local} to examine the interactions between the various biological factors under consideration.

The initial conditions for $\phi_T$, $\phi_N$, and $\phi_M$ are set as in the previous section. At the beginning of the simulation, the nutrient concentration is uniformly set to 1 throughout the domain. Similar to the original study \cite{fritz2019local}, a boundary condition of $\phi_\sigma=1$ was imposed on the right side of the domain to simulate a constant nutrient source. The MDE volume fraction is initialized at 0, and the ECM density is divided into two regions: one half of the domain is set to 1, and the other half to 1/2.

{
The tumor simulated with the parameters in Table \ref{param} is referred to as the baseline tumor. We first investigate the growth differences between the baseline tumor and aggressive tumors. Aggressive tumors may be represented by higher values of tumor proliferation and lower values of tumor apoptosis rate, for which the difference \( |\lambda_T^{\mathrm{pro}}| - |\lambda_T^{\mathrm{apo}}| \) becomes larger (we chose $\lambda_T^{\mathrm{pro}}=2.5$ and $\lambda_T^{\mathrm{apo}}=0.001$ in simulations).
}

\begin{figure}[t!]
  \centering
  \includegraphics[width=0.9\textwidth]{./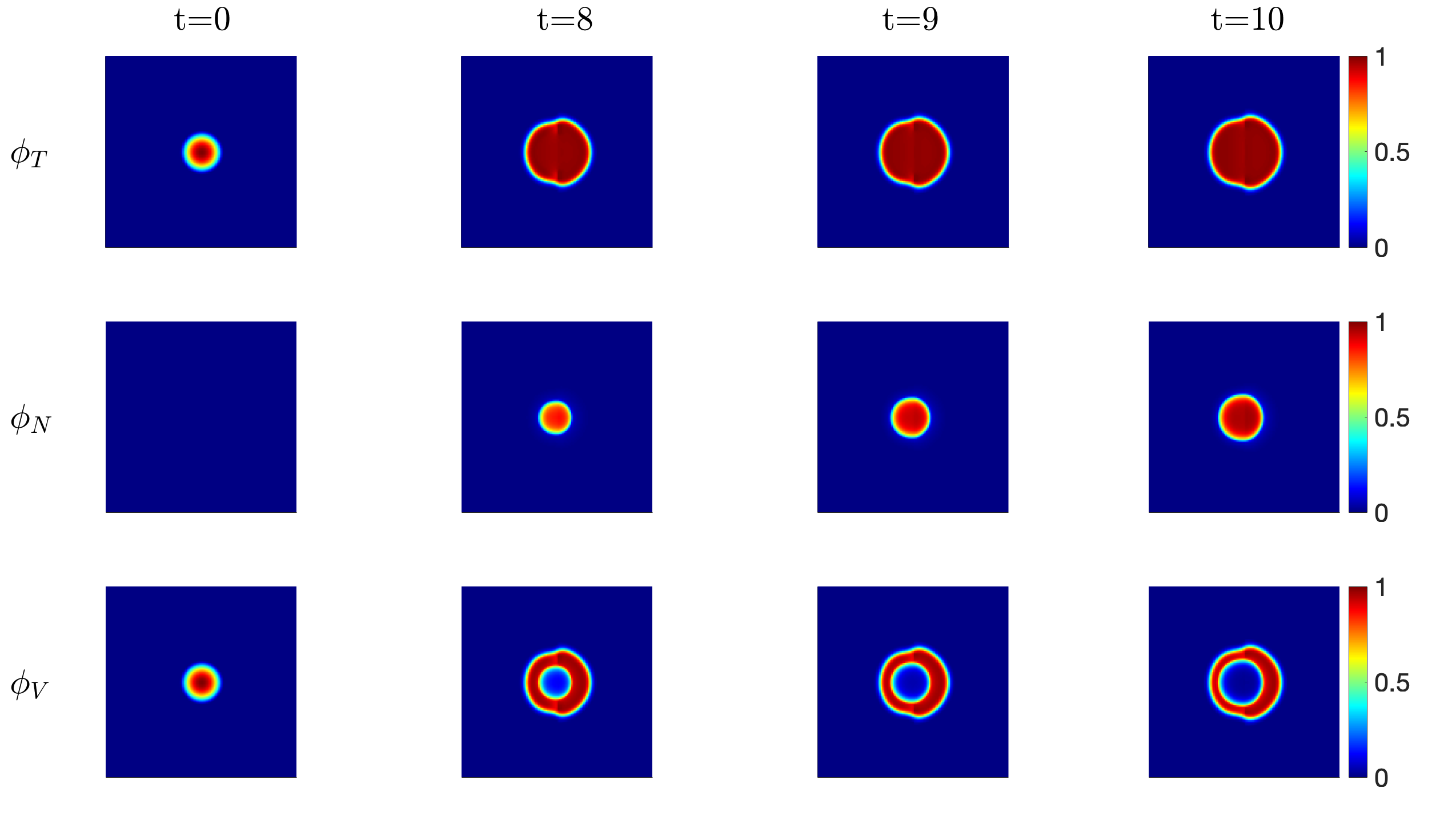}
  \caption{The evolution of the volume fractions of total tumor cells $\phi_T$ , necrotic cells $\phi_N$ , and viable cells $\phi_V$ for the baseline tumor.}
  \label{2D_tumor_base}
\end{figure}

\begin{figure}[t!]
  \centering
  \includegraphics[width=0.9\textwidth]{./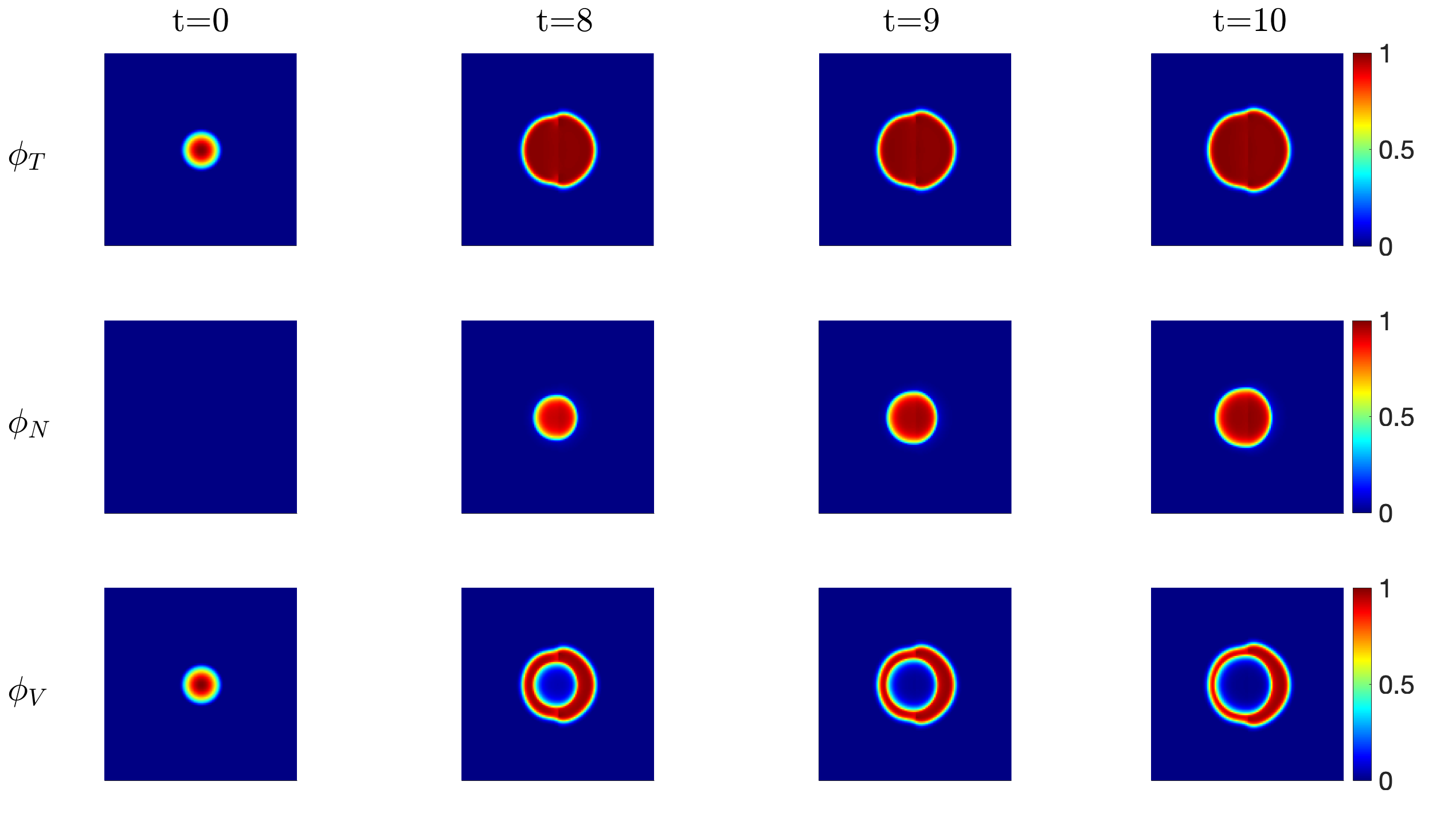}
  \caption{The evolution of the volume fractions of total tumor cells $\phi_T$ , necrotic cells $\phi_N$ , and viable cells $\phi_V$ for the aggressive tumor.}
  \label{2D_tumor_agg}
\end{figure}

\begin{figure}[t!]
  \centering
  \includegraphics[width=0.85\textwidth]{./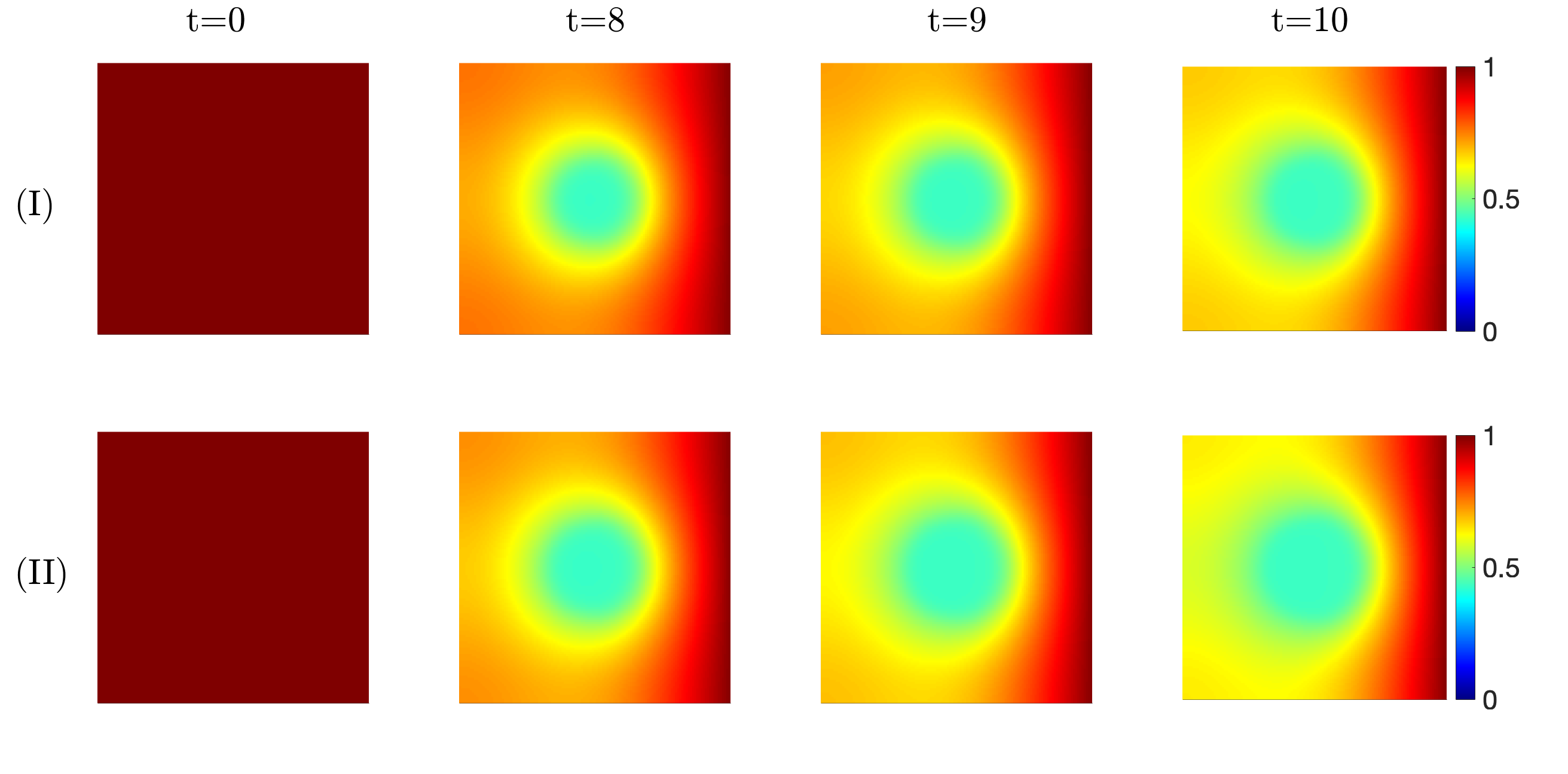}
  \caption{Nutrient concentration distribution at \( t = 0, 8, 9, 10 \) for (I) baseline and (II)aggressive tumors. }
  \label{2D_nutrient_base_agg}
\end{figure}

Fig.~\ref{2D_tumor_base} illustrates the  simulated volume fractions of total tumor cells $\phi_T$, necrotic cells $\phi_N$, and viable cells $\phi_V$ at time instants $t= 0,8,9,10$ for the baseline tumor. Initially, as shown in the first column of Fig.~\ref{2D_tumor_base}, {we begin with a small circular volume fraction of tumor cells} without any necrotic regions. Over time, as the tumor grows and nutrients struggle to reach the center, a necrotic core gradually forms. Viable tumor cells arrange themselves into a ring-shaped structure. As time progresses, the necrotic core expands, and the viable tumor cells continue to proliferate. By the time instant $t = 10$, the tumor exhibits a necrotic core surrounded by viable tumor cells. Furthermore, it can be observed that the tumor moves toward the right side, where constant nutrients are provided, simulating a high-nutrient concentration environment. Fig.~\ref{2D_tumor_agg} illustrates the growth of aggressive tumors, which exhibit similar structural features. In contrast to the baseline tumor, the aggressive tumor shows an expansion in volume, an increase in the necrotic core size, and a thinning of the ring structure of viable cells.

{In Fig.~\ref{2D_nutrient_base_agg}, the tumor continuously consumes the surrounding nutrients over time, reflecting the tumor's dependence on nutrient supply for growth. Meanwhile, compared to the baseline tumor, the aggressive tumor demonstrates faster nutrient depletion. Furthermore, Fig.~\ref{2D_tumor_base}-Fig.~\ref{2D_nutrient_base_agg} show that the tumor tends to expand into nutrient-rich areas to promote its growth and proliferation.}

{
  Reference \cite{fritz2019local} presents simulation results of a mixed structure where necrotic and viable cells occupy approximately half of the central volume each (\( \phi_N \approx \phi_V \approx 0.5 \)) over long-term estimation. In contrast, our simulation results indicate that the tumor undergoes stratification due to insufficient nutrient penetration as time progresses, ultimately forming a central necrotic core. As shown in Fig.~(3c) of Hirschhaeuser et al. \cite{hirschhaeuser2010multicellular}, the FaDu spheroid section exhibits \textquotedblleft the typical architecture of a multicellular tumor spheroid with an outer viable rim and a necrotic core\textquotedblright. Egloff-Juras et al. \cite{egloff2021validation} discovered that in FaDu spheroids, at day 3 the proliferative cells were present throughout the spheroid and by day 7, the necrotic core began to appear, \textquotedblleft while the proliferative cells were mainly located on the periphery of the spheroid, which can be considered a typical spheroid morphology \textquotedblright. Our simulation results have demonstrated behaviors similar to those described in these studies.
}

{
Next, we examine the effect of MDE expression on ECM degradation. In comparison with the baseline tumor, high MDE expression can be represented by higher MDE production rates and lower MDE decay rates, resulting in an increased difference \( |\lambda_M^{\mathrm{pro}}| - |\lambda_M^{\mathrm{dec}}| \). Specifically, the parameters for high MDE expression are given by \( \lambda_M^{\mathrm{pro}} = 1.5 \) and \( \lambda_M^{\mathrm{dec}} = 0.5 \). In contrast, low MDE expression is characterized by higher MDE decay rates and lower MDE production rates, leading to a smaller difference \( |\lambda_M^{\mathrm{pro}}| - |\lambda_M^{\mathrm{dec}}| \), with the parameters for low MDE expression being \( \lambda_M^{\mathrm{pro}} = 0.5 \) and \( \lambda_M^{\mathrm{dec}} = 1.5 \).
}
\begin{figure}[t!]
  \centering
  \includegraphics[width=0.9\textwidth]{./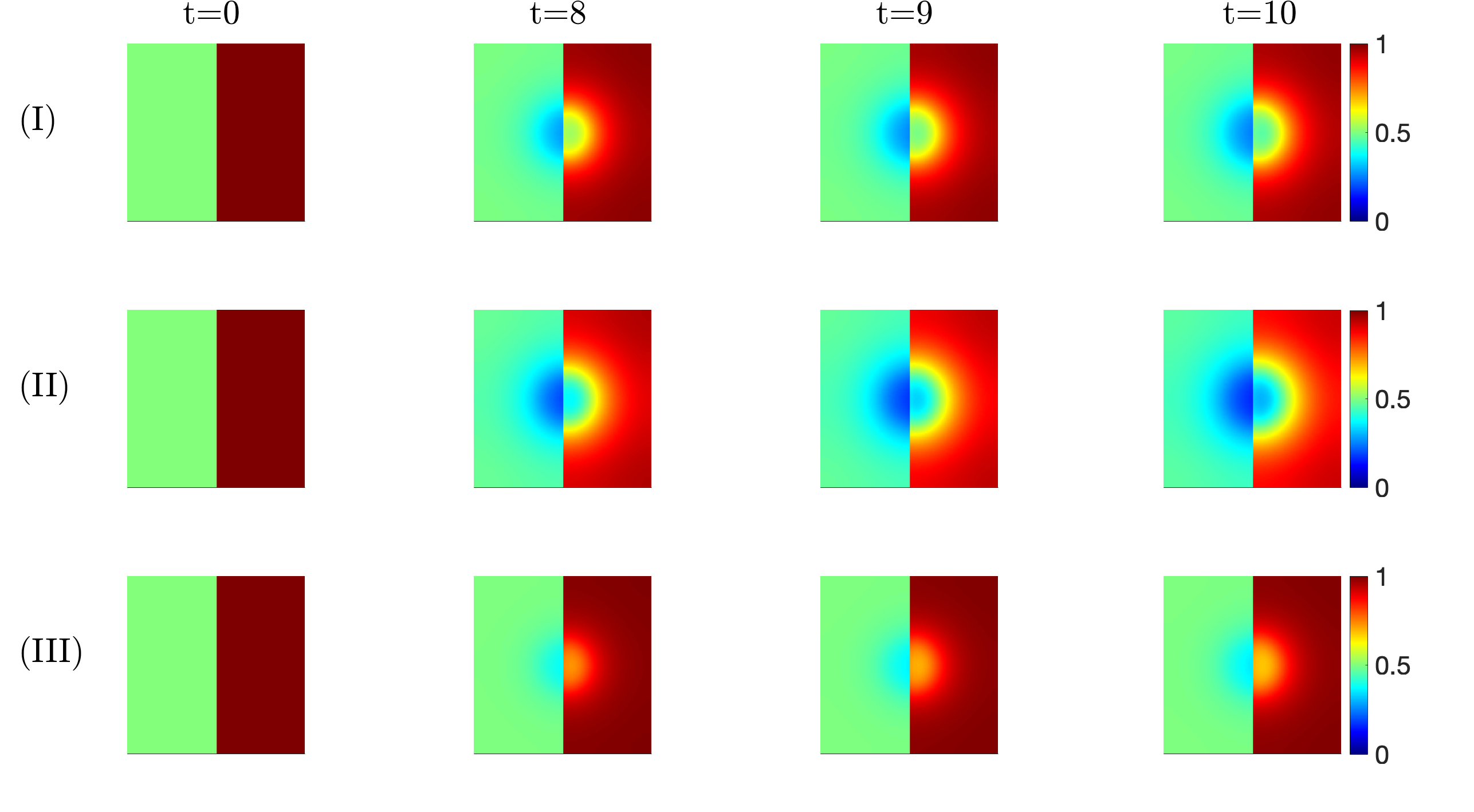}
  \caption{The evolution of ECM density during tumor growth under different levels of MDE expression: (I) baseline tumor, (II) high MDE expression, (III) low MDE expression}
  \label{2D_MDE_base_high_low}
\end{figure}
{
In Fig.~\ref{2D_MDE_base_high_low}, the evolution of ECM density during tumor growth under different levels of MDE expression is illustrated. The second and third rows imply that high MDE expression leads to more significant ECM degradation, while low MDE expression results in weaker ECM degradation. This dynamic interaction influences the ECM degradation process and affects the tumor microenvironment.
}

{
We now investigate the effects of different haptotaxis parameters \(\chi_H\) on tumor volume fraction growth. Fig.~\ref{2D_chiH_base_high_low} displays the tumor evolution under different haptotaxis parameters: a baseline tumor with \(\chi_H = 0.001\), a low haptotaxis parameter with \(\chi_H = 0.0005\), and a high haptotaxis parameter with \(\chi_H = 0.002\). Compared to the baseline tumor, the tumor with a low haptotaxis parameter adopts a more circular shape and exhibits lower sensitivity to the ECM gradient. In contrast, with a high haptotaxis parameter, the tumor shape forms a bump along the vertical axis. As shown in Fig.~\ref{2D_chiH_base_high_low}, with the increase in the haptotaxis parameter, tumor cells may become more sensitive to the ECM density gradient and could tend to migrate along regions with greater variations in ECM density.
}
\begin{figure}[t!]
  \centering
  \includegraphics[width=0.9\textwidth]{./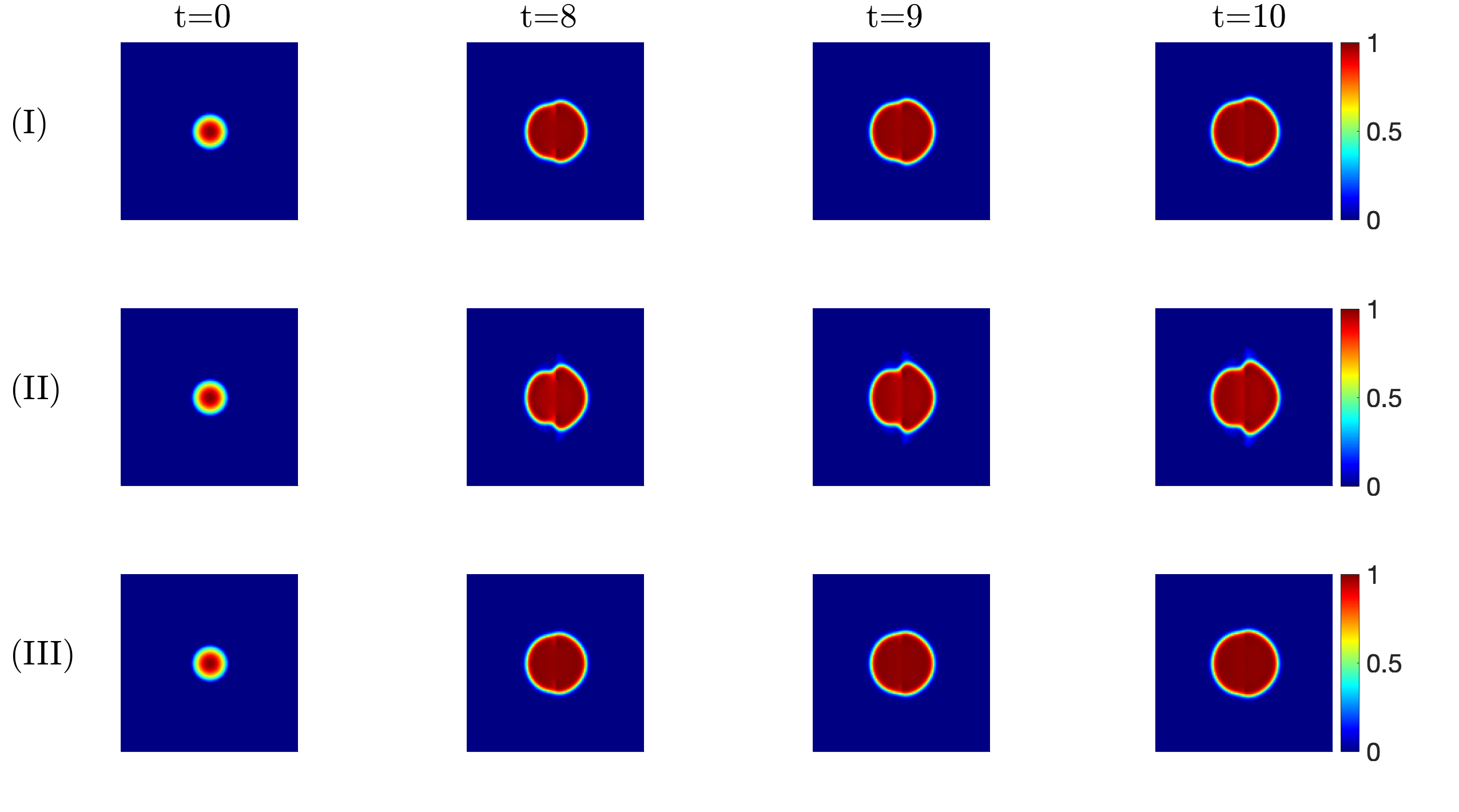}
  \caption{$\phi_T$ under different haptotaxis parameters: (I) baseline, (II) high Haptotaxis, (III) low haptotaxis parameters to ECM gradients}
  \label{2D_chiH_base_high_low}
\end{figure}

\subsection{Three-dimensional simulations}
In the three-dimensional computation, we consider two distinct, separated elliptical tumor volume fractions as initial conditions. In terms of the first tumor volume fraction, we define a spherical region centered at $(-0.15, -0.15, 0)$ and calculate the volume fraction using the following function:
\begin{equation*}
\phi_{T_{ball}} (x,y,z) = \exp \Big( 1-\frac{1}{1-16 ((x+0.15)^2 + (y+0.15)^2 + z^2 )} \Big).
\end{equation*}
Regarding the second tumor volume fraction, we rotate the coordinates, define an elliptical region centered at $(0.15, 0.15, 0)$, and calculate the volume fraction using the following function:
\begin{equation*}
\phi_{T_{elliptic}} (x,y,z) = \exp \Big(1-\frac{1}{1-16 ((x_{rot})^2 + (y_{rot}/1.35)^2 + z^2 )} \Big),
\end{equation*}
where $(x_{rot}, y_{rot})$ is obtained by rotating
$(x-0.15, y-0.15)$ using a rotation matrix defined as:
\begin{equation*}
\begin{bmatrix}
\cos(\gamma) & -\sin(\gamma) \\
\sin(\gamma) & \cos(\gamma)
\end{bmatrix}, \quad \mbox{with} \, \, \, \gamma = \frac{\pi}{4} .
\end{equation*}
Finally, we add these two tumor volume fractions to obtain the total tumor volume fraction
\begin{equation*}
\phi_T = \phi_{T_{ball}} + \phi_{T_{elliptic}}.
\end{equation*}

At the initial time, the nutrient concentration $\phi_\sigma$ throughout the domain is set to 1, the MDE volume fraction $\phi_M$ is initialized to 0, and the ECM density $\theta$ is also initialized using the ring structure, similar to the profile \eqref{initial_ECM} in the 2D simulation.

We take a numerical resolution $N_x=N_y=N_z=128$ and the time step size of $\tau=8 \times 10^{-3}$. Fig.~\ref{tumor_3D} illustrates the evolution of tumor cell volume fractions within the three-dimensional domain $\Omega = (-1, 1)^{3}$. These isosurfaces clearly depict the growth and expansion of the tumor over time. It is observed that the tumors begin to connect at $t = 2$; as time progresses, the conjoined tumor volume fraction becomes larger.
\begin{figure}[t!]
    \centering
\includegraphics[width=0.95\textwidth]{./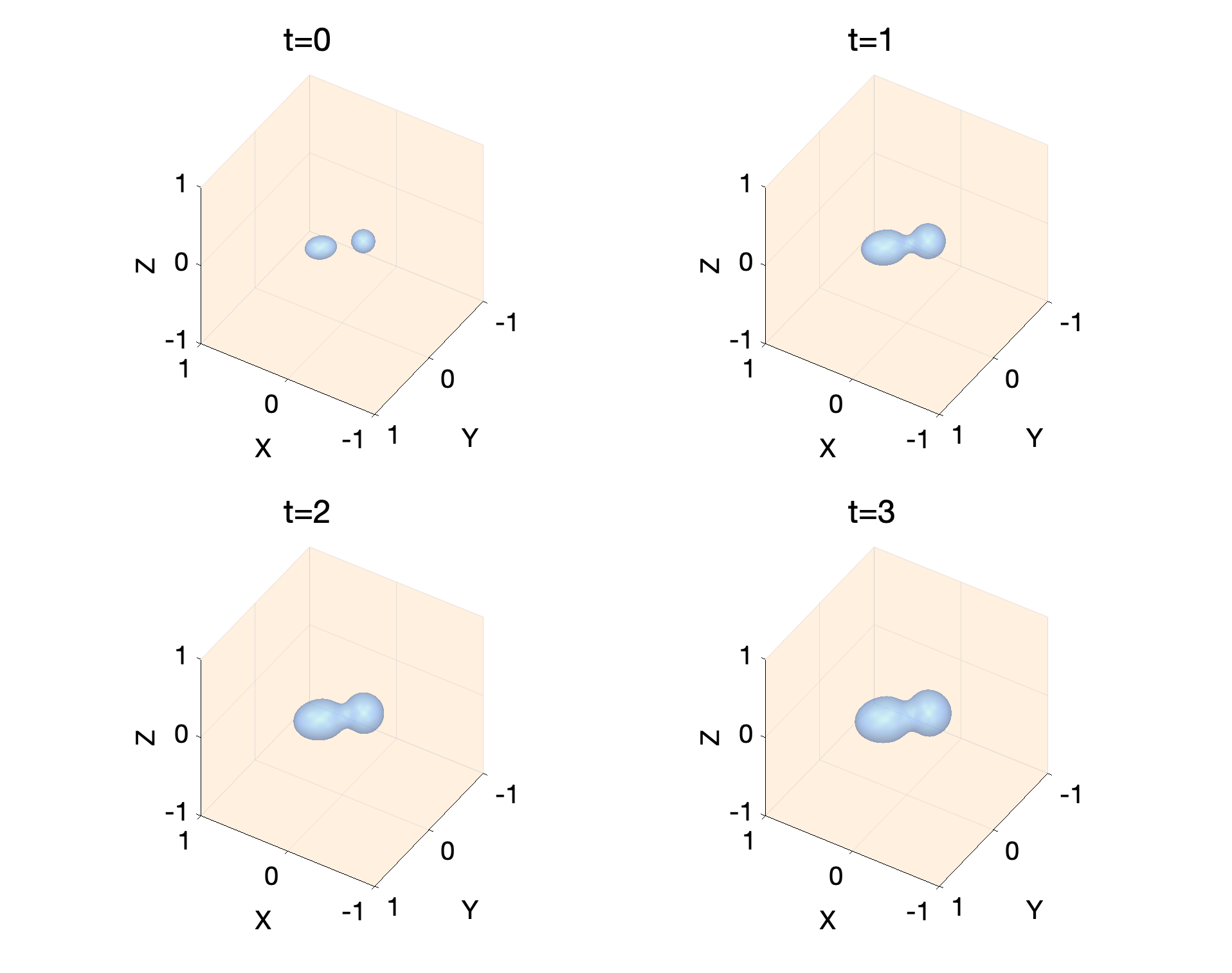}
    \caption{Volume fraction simulation of $\phi_T$ within a three-dimensional domain $\Omega=(-1, 1)^{3}$, with isosurfaces at a volume fraction of 0.8 shown at $t = \{0,1,2,3\}$.}
    \label{tumor_3D}
\end{figure}

{
  Our simulation results have provided visualizations of the tumor growth model. In addition, the presented numerical examples have also offered a clear understanding of how different biological factors influence tumor growth dynamics, and these results may assist in the prediction of tumor progression.
}

\section{Conclusion}
This research proposes a numerical approach to simulate a tumor growth model, emphasizing the degradation of the extracellular matrix (ECM). This model incorporates various biological elements, including tumor cells, viable cells, necrotic cells, and the evolution of MDE and ECM.

The main contributions of this work are fourfold. First, we have developed a decoupled, linear, explicit, and easily solvable algorithm. The algorithm exhibits excellent properties in terms of the MBP and bound preserving, with a theoretical proof. Second, the numerical algorithm employs a cutoff only for the variable $\phi_T$, while the remaining four variables automatically maintain the MBP and bound preserving properties. As a result, the computation adheres to the physical properties at the discrete level. In comparison, the method in an existing work~\cite{fritz2019local} applies a cutoff to all variables, which only guarantees that the variables lie within $[0,1]$ if they are inserted into the equation, but does not ensure the condition satisfied at every computed time steps. {Third, we simulated tumor growth under different initial ECM conditions and explored the effects of nutrient supply, variations in MDE expression levels on ECM degradation, as well as the influence of different chemotaxis parameters on tumor growth. Finally, for the ETDRK scheme, we accelerated the computation using FFT-based DCT.} In the presented 3D simulation ($N=128$, $\tau=8 \times 10^{-3}$, and $T=3$), we used a MacBook Pro 14 with an Apple M1 Pro chip and 16GB of RAM. This computation, implemented in MATLAB and completed in a total elapsed wall-clock time of 613.6921 seconds, has demonstrated high efficiency.

In future research, our objective is to explore the impact of angiogenesis on tumor growth and to consider specific cancers and drug therapies in the model. In conclusion, this research provides promising insights for the advancement of clinical research in cancer and enhances the understanding of tumor growth dynamics.

\section*{Acknowledgments}

Q. Huang's work is supported by the National Natural Science Foundation of China (No.12371385); Z. Qiao's work is partially supported by the CAS AMSS-PolyU Joint Laboratory of Applied Mathematics (No. JLFS/P-501/24) and the Hong Kong Research Grants Council RFS grant RFS2021-5S03, GRF grants 15302122 and 15305624; C. Wang's work is supported by the National Science Foundation (DMS 2012269, DMS 2309548).

\bibliographystyle{plain}
\bibliography{reference}
\end{document}